\theoremstyle{plain}
\newtheorem{theorem}{Theorem}[section]
\newtheorem{lemma}[theorem]{Lemma}
\newtheorem{cor}[theorem]{Corollary}
\newtheorem{prop}[theorem]{Proposition}
\newtheorem{conjecture}[theorem]{Conjecture}
\theoremstyle{definition}
\newtheorem{defn}[theorem]{Definition}
\newtheorem{remark}[theorem]{Remark}
\numberwithin{equation}{section}
\newcommand{\ba}{\backslash}
\newcommand{\R}{\mathbb{R}}
\newcommand{\Z}{\mathbb{Z}}
\newcommand{\ov}{\overline}
\newcommand{\val}{\operatorname{val}}
\newcommand{\supp}{\operatorname{Supp}}
\begin{document}
\pagestyle{headings}

\title{Affine Deligne-Lusztig varieties associated to additive affine Weyl group elements}
\author{E. T. Beazley}
\address{Haverford College, Department of Mathematics \& Statistics, 370 Lancaster Avenue, Haverford, PA 19041}
\email{ebeazley@haverford.edu}

\begin{abstract}
Affine Deligne-Lusztig varieties can be thought of as affine analogs of classical Deligne-Lusztig varieties, or Frobenius-twisted analogs of Schubert varieties. We provide a method for proving a non-emptiness statement for affine Deligne-Lusztig varieties inside the affine flag variety associated to affine Weyl group elements satisfying a certain length additivity hypothesis.  In particular, we prove that non-emptiness holds whenever it is conjectured to do 
so for alcoves in the shrunken dominant Weyl chamber, providing a partial converse to the emptiness results of G\"{o}rtz, Haines, Kottwitz, and Reuman. Our technique involves the 
work of Geck and Pfeiffer on cuspidal conjugacy classes, in addition to an analysis of the combinatorics of certain fully commutative elements in the finite Weyl group. 
\end{abstract}

\maketitle

\begin{section}{Introduction}\label{S:intro}

\renewcommand{\thefootnote}{}
\footnote{\textbf{Key Words}: affine Deligne-Lusztig variety, affine Weyl group, elliptic/cuspidal conjugacy classes, fully commutative element, loop group}
\footnote{\textbf{Mathematics Subject Classification (2000)}: Primary 20G25, 20F55; Secondary 14L05}

Let $G$ be a split connected reductive group over a finite field $k$, and fix a Borel subgroup $B$.  Fix a split maximal torus in $B$, and denote by $W$ the Weyl group of $G$.  Working over a finite field introduces a Frobenius automorphism, denoted by $\sigma$.  In 1976, Deligne and Lusztig constructed a family of algebraic varieties parameterized by the elements of the finite Weyl group $$X_w := \{ g \in G/B \mid g^{-1}\sigma(g) \in BwB \}$$ in order to study the representation theory of finite Chevalley groups (see \cite{DL}, \cite{LuszChev}). Given a fixed element $w \in W$, the associated classical Deligne-Lusztig variety $X_w$ is smooth, equidimensional of dimension $\ell(w)$.  As such, classical Deligne-Lusztig varieties can be thought of as Frobenius-twisted analogs of Schubert varieties.  

More recently, interest in a generalization of classical Deligne-Lusztig varieties to the context of an affine root system has emerged.  This interest was generated in large part because affine Deligne-Lusztig varieties are related to the reduction modulo $p$ of Shimura varieties (see \cite{Hai}, \cite{RapShimura}).  However, in contrast to their classical counterparts, the geometry of affine Deligne-Lusztig varieties is much more difficult to understand.  When we replace $k$ by $k((\pi))$, the corresponding affine variety is parameterized by two elements, one element $x$ from the affine Weyl group, and another element $b$ from the group $G$.  The affine Deligne-Lusztig variety $X_x(b)$ is rarely smooth, and even in the case of $b=1$, the dimension does not usually coincide with $\ell(x)$. Furthermore, unlike in the classical case in which Lang's Theorem holds so that $X_w$ is always non-empty, the affine Deligne-Lusztig variety $X_x(b)$ is frequently empty.  

Until quite recently the question of determining for which pairs $(x,b)$ the associated affine Deligne-Lusztig variety is non-empty as a set has remained largely open.  In the context of affine Deligne-Lusztig varieties inside the affine Grassmannian, the non-emptiness question has been settled, and a dimension formula has also been proved in \cite{GHKR} and \cite{VieDim}.  Here, the characterization for non-emptiness is phrased in terms of Mazur's inequality, which is a group-theoretic generalization of the inequality between the Hodge and Newton vectors in crystalline cohomology (see \cite{Kat}, \cite{Maz}).  Mazur's inequality relates the cocharacter $\mu$ from the torus part of the affine Weyl group element $x$ and the Newton polygon of $b$.  If $\nu$ denotes the Newton polygon associated to $b$, Mazur's inequality roughly states that $\nu \leq \mu$, which means that the difference $\mu-\nu$ is a non-negative linear combination of positive coroots.  That Mazur's inequality is necessary for non-emptiness was proved by Kottwitz and Rapoport in \cite{KRFcrystals}, in which they also proved that this criterion is sufficient for $G=GL_n$ and $GSp_{2n}$ in the context of the affine Grassmannian.  Lucarelli then proved that Mazur's inequality guarantees non-emptiness for the classical groups in \cite{Luc}, and Gashi settled the question for the exceptional groups in \cite{Gas} and \cite{GasGLn}.

The current paper is one of several recent papers which addresses the non-emptiness question for affine Deligne-Lusztig varieties inside the affine flag variety.  In the context of the affine flag variety, there is no simple inequality that characterizes the non-emptiness pattern.  Here, Mazur's inequality is again necessary, but for most pairs $(x,b)$ this inequality does not suffice to yield non-emptiness of $X_x(b)$.  In the simplest situation in which $b=1$ and $x$ corresponds to an alcove in the ``shrunken'' Weyl chambers, Reuman characterized the elements $x$ for which $X_x(1)$ is non-empty in types $A_1, A_2,$ and $C_2$ (see \cite{Reu}), and he conjectured that his findings were a general pattern.  In \cite{Be1}, we extended Reuman's results to classify all pairs $(x,b)$ for which $X_x(b)$ is non-empty in the case $G=SL_3$.  A recent result of G\"{o}rtz, Haines, Kottwitz, and Reuman \cite{GHKRadlvs} provides a
description, which is conjectured to be a characterization, for determining emptiness of affine Deligne-Lusztig
sets associated to any basic loop group element, and our result provides a partial converse to the main theorem in \cite{GHKRadlvs}. 

In this paper, we use purely combinatorial techniques to prove non-emptiness for a class of affine Deligne-Lusztig varieties associated to affine Weyl group elements which satisfy a certain length additivity criterion.  The two main ingredients are the theory of cuspidal conjugacy classes as developed by Geck and Pfeiffer in \cite{GP}, and the combinatorics of certain fully commutative elements in the finite Weyl group (see \cite{StFC}).  The fully commutative elements in a Coxeter group have special properties which are related to the smoothness of Schubert varieties.  In particular, the characterization of fully commutative elements in terms of pattern avoidance given by Stembridge in \cite{StRW} is used by Billey and Postnikov in \cite{BillPost} to generalize the results of Lakshmibai and Sandhya in \cite{LakSand} and Fan in \cite{Fan} to describe families of smooth Schubert varieties.  It would be interesting to see if this connection generalizes to the Frobenius-twisted context to provide additional geometric information about affine Deligne-Lusztig varieties.

We also mention several other recent independent results on the non-emptiness question inside the affine flag variety.  In \cite{HeWC}, He proves a non-emptiness pattern for $X_x(1)$ if the translation part of $x$ is quasi-regular.  Most recently, G\"{o}rtz and He prove the non-emptiness conjecture in \cite{GHKRadlvs}, although still under Reuman's original shrunken hypothesis.  It remains a hard problem to characterize non-emptiness for alcoves which lie outside the shrunken Weyl chambers, and the only insights into this problem to date occur in the work of Reuman \cite{Reu} and the author \cite{Be1} for groups of low rank.

\begin{subsection}{Notation}\label{S:notation}

Let $k$ be a finite field with $q$ elements, and let $\overline{k}$ be an algebraic closure of $k$.  Denote by $\pi$ the uniformizing element of the discrete valuation ring $\mathcal{O}:= \ov{k}[[\pi]]$, having fraction field $L:=\overline{k}((\pi))$ and maximal ideal $P:= \pi \mathcal{O}$.  Normalize the valuation homomorphism $\val: L^{\times} \rightarrow \Z$ so that $\val(\pi) = 1$.  We can extend the usual Frobenius automorphism $x \mapsto x^q$ on $\ov{k}$ to a map $\sigma: L \rightarrow L$ given by $\sum a_i\pi^i \mapsto \sum a_i^q\pi^i$.  Denote by $F:= k((\pi))$ the Frobenius fixed subfield of $L$.

Let $G$ be a split connected reductive group over $k$, and let $B$ denote a fixed Borel subgroup and $A$ a
maximal torus in $B$. Let $W$ denote the Weyl group of $A$ in $G$.  Let $S$ be the set of finite simple reflections, and $W_T$ the subgroup of $W$ generated by $T \subset S$. For a fixed $w \in W$, denote by $D_L(w)$ and $D_R(w)$ the left and right descent sets of $w$, respectively; \textit{i.e.}, $D_L(w) = \{ s \in S \mid \ell(sw) < \ell(w) \}$ and analogously for $D_R(w)$. The descent set of $w$ will be denoted $D(w) = D_L(w) \cup D_R(w)$.  Let $\supp(w)$ denote the support of $w$, which is the set of simple reflections used in any (equivalently every) reduced expression for $w$.  If $\supp(w) = S$ we say that $w$ is of full support.

Let $P = MN$ be a parabolic subgroup of $G$
that contains $B$, where $M$ is the unique Levi subgroup of $P$ that contains $A$ and $N$ is the unipotent radical.
Denote by $R$ the set of roots of $A$ in $G$ and by $R^+$ and $R^-$ the corresponding set of positive and
negative roots, respectively.  Denote by $\rho$ the half-sum of the positive roots of $A$.  For a cocharacter
$\lambda \in X_*(A)$, we will write $\pi^{\lambda}$ for the element in $A(L)$ that is the image of $\pi$ under
the homomorphism $\lambda: \mathbb{G}_m \rightarrow A$.  Denote by $\Lambda_G$ the quotient of $X_*(A)$ by the coroot lattice, and by $\eta_G$ the surjection $\eta_G: G(L) \rightarrow \Lambda_G$.

Let $\mathfrak{a} := X_*(A)_{\R}$. The dominant Weyl chamber $C$ is defined to be the set of $x \in \mathfrak{a}$ such that $\langle \alpha, x \rangle > 0$ for every $\alpha \in R^+$. Analogously, denote by $C^0$ the antidominant Weyl chamber, or the set of $x \in \mathfrak{a}$ such that $\langle \alpha, x \rangle < 0$ for all $\alpha \in R^+$.  Our convention will be to call the unique alcove in $C$ whose
closure contains the origin the base alcove $\mathbf{a}$. Let $I$ be the associated Iwahori subgroup of the loop group $G(L)$, which fixes the base alcove $\mathbf{a}$.  The Iwahori subgroup $I$ is the inverse image of the opposite Borel subgroup under the projection map $G(\mathcal{O}) \rightarrow G(\overline{k})$.

Denote by $\widetilde{W} = X_*(A)\rtimes W$ the (extended) affine Weyl group, which acts on the set of (extended) alcoves in $\mathfrak{a}$. We shall usually express an element $x\in
\widetilde{W}$ as $x = \pi^{\mu}w$, where $\mu \in X_*(A)$ and $w \in W$. An alcove in $\mathfrak{a}$ can be written as $x\mathbf{a}$ for a unique $x \in \widetilde{W}$, and we will frequently use this correspondence between alcoves and elements in the affine Weyl group element without comment.

\end{subsection}

\begin{subsection}{Affine Deligne-Lusztig varieties}\label{S:adlvs}

 Let $x \in \widetilde{W}$ and $b \in G(L)$.  The affine Deligne-Lusztig variety associated to the elements $x$ and $b$, denoted $X_x(b)$, inside the affine flag manifold is defined as follows: \begin{equation*}
 X_x(b) := \{ g \in G(L)/I : g^{-1}b\sigma(g) \in IxI\}.
\end{equation*}

We now recall one version of the conjecture for non-emptiness of these affine Deligne-Lusztig varieties in the case that $b$ is basic.
Denote by $\eta_1: \widetilde{W} \rightarrow W$ the surjection from the affine Weyl group onto the finite Weyl
group, and by $\eta_2: \widetilde{W} \rightarrow W$ the map that associates to each alcove the finite Weyl
chamber in which it lies.
\begin{conjecture}[G\"{o}rtz-Haines-Kottwitz-Reuman]\label{T:reuman}
Suppose that $x$ lies in the shrunken Weyl chambers, and let $b \in G(L)$ be basic.  If $\eta_G(x) = \eta_G(b)$ and \begin{equation}\label{E:reuman} \eta_2^{-1}(x)\eta_1(x)\eta_2(x) \in W \ba \bigcup_{T \subsetneq S} W_T,
\end{equation} then $X_x(b) \neq \emptyset$.
\end{conjecture}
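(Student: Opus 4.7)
The plan is to reduce the non-emptiness question to a statement about specific good representatives of cuspidal conjugacy classes in the finite Weyl group $W$. The hypothesis $\eta_2^{-1}(x)\eta_1(x)\eta_2(x) \in W \setminus \bigcup_{T \subsetneq S} W_T$ says exactly that the finite Weyl group element $v := \eta_2^{-1}(x)\eta_1(x)\eta_2(x)$ is \emph{cuspidal} (elliptic), i.e., not contained in any proper parabolic subgroup of $W$. Since non-emptiness of $X_x(b)$ is invariant under suitable $\sigma$-conjugation of $x$ in $\widetilde{W}$, I would first show that $x$ may be replaced by a conjugate of the form $x' = \pi^{\mu'} c$, where $c$ is a distinguished minimum-length representative of the conjugacy class of $v$ --- for instance, a Coxeter element of $W$ or a product of Coxeter elements of Levi factors --- as supplied by the Geck--Pfeiffer theory of cuspidal classes.

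The second ingredient is a length additivity statement: since $\mu$ lies in the shrunken dominant Weyl chamber, the pairings $\langle \alpha, \mu \rangle$ are uniformly large relative to $\rho$, which I expect to force $\ell(\pi^{\mu'} c) = \ell(\pi^{\mu'}) + \ell(c)$. This is where the combinatorics of Coxeter elements becomes crucial: one must verify that each simple reflection appearing in a reduced expression for $c$ is a genuine right ascent for $\pi^{\mu'}$, so that concatenation of reduced words is itself reduced. Geck--Pfeiffer theory gives the combinatorial toolkit for navigating between conjugacy class representatives while tracking length through cyclic shifts, and should allow one to propagate length additivity along a chain of conjugations, while keeping the translation part inside a shrunken region.

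Once $x$ is in the normal form $\pi^{\mu'} c$ with length additivity, I would construct an explicit element $g \in G(L)/I$ witnessing non-emptiness of $X_x(b)$. Taking $b$ to be a basic element compatible with $\eta_G(x)$, one builds $g$ inductively along the length-additive factorization: the translation part $\pi^{\mu'}$ handles the Newton/Kottwitz bookkeeping, while the Coxeter factor $c$ is treated by peeling off simple reflections and using the unipotent structure of the affine root groups to absorb the Frobenius twist, in the spirit of the Lang--Steinberg type constructions used in the classical Deligne--Lusztig setting. Length additivity is precisely what ensures that the Iwahori double cosets multiply cleanly, so that $g^{-1} b \sigma(g)$ lands in $I x I$ rather than a Bruhat-smaller cell.

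The main obstacle, and where I anticipate most of the technical effort, is the combinatorial step of producing the normal form: proving that for every $x$ in the shrunken dominant Weyl chamber whose image is cuspidal, there exists a chain of $\sigma$-conjugations in $\widetilde{W}$ carrying $x$ to a length-additive element $\pi^{\mu'} c$ with $c$ a Geck--Pfeiffer representative. This requires simultaneously controlling the descent sets $D_L$ and $D_R$, tracking the movement of $\mu$ under the finite Weyl group action along the way, and leveraging shrunkenness to rule out length drops that would leave the shrunken region. The reliance on cuspidality is essential here, because it is precisely the absence of parabolic subclasses that makes the distinguished Coxeter-type representatives available as the target of the conjugation.
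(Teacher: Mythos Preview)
The statement you are attempting to prove is presented in the paper as a \emph{conjecture}; the paper does not prove it in full, only the special case recorded as Theorem~\ref{T:main} (type $A_n$, $C_2$, or $G_2$; $\mu$ regular dominant; $x$ additive and of Reuman type). So there is no complete proof in the paper to compare against, and your proposal is aiming at more than the paper establishes.

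More seriously, your opening move contains a genuine error. You assert that the condition $\eta_2^{-1}(x)\eta_1(x)\eta_2(x) \in W \setminus \bigcup_{T \subsetneq S} W_T$ says this element is cuspidal (elliptic). It does not. The condition says only that the element has \emph{full support}: it lies in no \emph{standard} parabolic $W_T$. Elliptic means lying in no \emph{conjugate} of a proper parabolic, a strictly stronger property. For instance, in $S_5$ the element $s_4s_3s_2s_1s_2s_3s_4 = (1\ 5)$ has full support but is a transposition, hence conjugate into $W_{\{s_1\}}$ and not elliptic. Your plan to pass directly to a Geck--Pfeiffer representative of a cuspidal class therefore collapses at the outset: the conjugacy class of $v$ need not be cuspidal, and its minimal-length representatives will typically lie in proper parabolics and have \emph{lost} full support. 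The rest of the sketch (producing an explicit witness $g$ by peeling off simple reflections) is built on this misidentification and is in any case too vague to assess.

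For comparison, the paper's partial argument runs differently. It does not construct an explicit $g$; it takes non-emptiness for elliptic $x$ as a black box (Proposition~\ref{T:elliptic}) and uses Deligne--Lusztig style reductions (Propositions~\ref{T:bundle} and~\ref{T:isom}) to set up an induction on $\ell(w)$. Given a non-elliptic $x$ of full support, Proposition~\ref{T:swsprop} produces an $x'$ with shorter finite part while preserving additivity; Geck--Pfeiffer theory (Theorem~\ref{T:sws}) guarantees such length-decreasing conjugations exist until one reaches a minimal-length element, which is then elliptic by Proposition~\ref{T:min}. The delicate point, and where the type restriction enters, is showing that full support can be \emph{maintained} along the chain; this is the content of Section~\ref{S:except} on exceptional Coxeter elements and is exactly the obstacle your sketch does not see because it assumes ellipticity from the start.
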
 \noindent  Informally, the shrunken Weyl chambers are a union of alcoves which do not lie too close to the walls of the Weyl chambers.  We refer the reader to \cite{Reu} for Reuman's original definition of the shrunken Weyl chambers, and to \cite{GHKRadlvs} for further discussion on this conjecture.

The above stated version of the non-emptiness conjecture is the one with which we shall work, although we point out that G\"{o}rtz, Haines, Kottwitz, and Reuman use the notion of $P$-alcoves to extend this conjecture to include the ``non-shrunken'' Weyl chambers (see Conjecture 1.1.1 in \cite{GHKRadlvs}).  We prefer the above formulation of the non-emptiness conjecture for the purposes of this paper because it is more convenient to characterize
elements $x\in \widetilde{W}$ for which we expect non-emptiness using a combinatorial description derived easily
from criterion \eqref{E:reuman}.  In particular, the set $W \ba \bigcup_{T \subsetneq S} W_T$ consists of all elements
of $W$ such that any reduced expression contains all simple reflections; \textit{i.e.} the finite Weyl group elements that have full support.

\end{subsection}

\begin{subsection}{Statement of the theorem}\label{S:thm}

Write $x = \pi^{\mu}w$ and note that for $\mu$ dominant, condition \eqref{E:reuman} reduces to $w \in W \ba
\bigcup_{T \subsetneq S} W_T$. In this paper, we replace the condition on the conjugate of $w$ appearing in \eqref{E:reuman} with two different hypotheses.  Namely, we will require $w$ itself to be of full support, and that the product $\eta_2^{-1}(x)\eta_1(x)$ is length additive.

As such, we shall often restrict ourselves to considering elements $x = \pi^{\mu}w$ such that
any reduced expression for $w$ contains all simple reflections.  It will be useful to have more descriptive terminology to refer to this simplified version of condition \eqref{E:reuman} on an affine Weyl group element, which we now introduce.
\begin{defn}
Let $x = \pi^{\mu}w \in \widetilde{W}$, where $w \in W$.  If $\supp(w) = S$, we say that $x$ is \textit{full}.
\end{defn}
\noindent Observe that $x \in \widetilde{W}$ being full is solely a condition on the finite part of $x$.  In addition, the reader should note that $x$ full is only sufficient for non-emptiness in the case of $\mu$ dominant.  In general one must consider the conjugate of $w$ specified by \eqref{E:reuman}. 

Now write $x = \pi^{v(\mu)}w\in \widetilde{W}$ where $\mu$ is dominant and $w,v \in W$.  One additional hypothesis on $x$ in our main theorem requires that the length of the product $v^{-1}w$ equals the sum of the lengths of $v$ and $w$.  For ease of reference, we introduced the following terminology to describe this length additivity criterion.
\begin{defn}
Let $x = \pi^{v(\mu)}w \in \widetilde{W}$, where $\mu$ is dominant and $w,v\in W$.  If $\ell(v^{-1}w) = \ell(v^{-1}) + \ell(w)$, then we say that $x$ is \emph{additive}.
\end{defn} 

In this paper we treat the situation in which $\mu$ is also regular, which permits use of various formulas for the length of affine Weyl group elements in Section \ref{S:length}, in addition to keeping our elements inside the shrunken Weyl chambers. Recall that for $G = GL_n$, the cocharacter $\mu$ is regular if and only if $\mu = (\mu_1, \mu_2, \dots, \mu_n)$
satisfies $\mu_1 > \mu_2 > \cdots > \mu_n$. 

Our goal is to prove Conjecture \ref{T:reuman} for a particular class of affine Weyl group elements, providing a partial converse to the main theorem of G\"{o}rtz, Haines, Kottwitz, and Reuman in \cite{GHKRadlvs}.  In particular, we prove this conjecture in the case in which $x = \pi^{v(\mu)}w \in \widetilde{W}$ is full and additive, and $\mu$ is regular dominant.

\begin{theorem}\label{T:main}
Suppose $G=GL_n$, or that $G$ is of type $C_2,$ or $G_2$.  Let $x=\pi^{v(\mu)}w \in \widetilde{W}$, where $v,w \in W$ are such that $\ell(v^{-1}w) = \ell(v^{-1})+\ell(w)$, and $\mu$ is regular dominant.  Let $b\in G(L)$ be basic.  Then if $\eta_G(x) = \eta_G(b)$ and $w\in W \ba
\bigcup_{T \subsetneq S} W_T$, we have $X_x(b) \neq \emptyset$.
\end{theorem}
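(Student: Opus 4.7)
Our strategy is to reduce the problem in two successive steps to showing non-emptiness for an affine Weyl group element of the form $\pi^{\mu} c$ with $c$ a Coxeter element in a standard parabolic of $W$, and then exhibit an explicit witness in that base case.

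The first reduction uses the additivity hypothesis to relate $x = \pi^{v(\mu)} w$ to the simpler element $x' = \pi^{\mu} w'$ with $w' := v^{-1}wv$. Concretely, we use iterated cyclic shifts of $x$ in $\widetilde{W}$ to peel off the $v$ on the left and the $v^{-1}$ on the right; each shift is conjugation by a simple affine reflection, and the additivity $\ell(v^{-1}w) = \ell(v^{-1}) + \ell(w)$ is exactly what allows these shifts to be arranged so that the length behaves compatibly. Invoking He's reduction theorem for affine Deligne--Lusztig varieties, the problem becomes equivalent to proving $X_{x'}(b') \neq \emptyset$ for an appropriate basic $b'$ in the $\sigma$-conjugacy class of $b$, with $\eta_G(b') = \eta_G(b) = \eta_G(x) = \eta_G(x')$. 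The length formulas of Section~\ref{S:length} are used throughout to verify that the intermediate elements continue to satisfy a Reuman-type criterion.

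The second reduction invokes the Geck--Pfeiffer theory of conjugacy classes in finite Weyl groups: in any conjugacy class $O \subseteq W$, the minimal-length elements are connected by chains of length-preserving cyclic conjugations. At the affine level these correspond to equivalences of non-emptiness between $X_{\pi^{\mu} w'}(b')$ and $X_{\pi^{\mu} w''}(b')$ as $w', w''$ range over minimal-length elements of $O$. A case-analysis for each of $G = GL_n$, $C_2$, and $G_2$ then identifies, among those conjugacy classes of $W$ containing a full-support representative, a distinguished minimal-length element: in each case, this turns out to be a Coxeter element of a standard Levi subgroup of $W$. The Reuman-type hypothesis on $w$ is precisely what guarantees that the relevant conjugacy classes reduce to such Coxeter elements.

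Finally, with $x$ reduced to $\pi^{\mu} c$ for $c$ a Coxeter element, we construct $g \in G(L)/I$ explicitly so that $g^{-1} b \sigma(g) \in I \pi^{\mu} c I$. The witness is built from a reduced expression for $c$ by combining simple root-subgroup elements with the translation $\pi^{\mu}$ in a prescribed cyclic pattern; the regular dominance of $\mu$ provides the valuative separation needed for the product to lie in the correct Iwahori double coset, and the basic hypothesis on $b$ together with $\eta_G(x) = \eta_G(b)$ ensures that the Newton polygons match. The principal obstacle will be this last step: the cyclic structure of $c$ interacts intricately with the Frobenius twist, the computation is inherently type-dependent, and one must carefully track the Iwahori decomposition at each stage. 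A secondary difficulty is verifying that every intermediate element produced during the Geck--Pfeiffer reduction remains inside the shrunken dominant regime where Conjecture~\ref{T:reuman} is formulated.
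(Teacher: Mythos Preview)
Your strategy diverges from the paper's in a way that introduces a genuine gap. The paper does \emph{not} first conjugate $x$ into the dominant chamber and then run Geck--Pfeiffer on the finite part; instead, Proposition~\ref{T:swsprop} interleaves two kinds of steps: length-preserving conjugations $x\mapsto sxs$ (Cases (2a), (3a)) and genuine reductions $x\mapsto sx$ or $x\mapsto xs$ (Cases (1), (2b), (3b)). The latter replace the finite part $w$ by $sw$ or $ws$, which \emph{changes the $W$-conjugacy class of $w$}. This is essential, because the hypothesis $\supp(w)=S$ does not force $w$ to lie in an elliptic class. For instance, in $GL_4$ take $v=1$ and $w=s_2s_1s_3s_2=(1\,3)(2\,4)$: this $w$ has full support and the additivity hypothesis holds trivially, yet every $W$-conjugate of $w$ is a product of two disjoint transpositions and is therefore never elliptic. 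Your scheme, built entirely from cyclic shifts (conjugation in $\widetilde W$), keeps the finite part inside this non-elliptic class forever and can never reach a base case to which Proposition~\ref{T:elliptic} applies. The paper handles this example by applying Case~(1) with $s=s_2$, replacing $w$ by $s_2w=s_1s_3s_2$, which \emph{is} Coxeter.

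Two further issues compound this. First, your assertion that finite Geck--Pfeiffer moves $w'\xrightarrow{s}w''=sw's$ lift to equivalences between $X_{\pi^\mu w'}(b')$ and $X_{\pi^\mu w''}(b')$ is incorrect: conjugating $\pi^\mu w'$ by $s$ in $\widetilde W$ yields $\pi^{s(\mu)}sw's$, so the translation part moves out of the dominant chamber. Tracking this drift is precisely the content of Proposition~\ref{T:swsprop} and the additivity bookkeeping there. Second, your proposed base case---a Coxeter element $c$ of a proper standard Levi---is fatal rather than helpful: for such $c$ one has $\supp(c)\neq S$, so $\pi^\mu c$ lies in the shrunken dominant chamber with $\eta_2^{-1}\eta_1\eta_2(\pi^\mu c)=c\in W_T$ for $T\subsetneq S$, and the emptiness direction of \cite{GHKRadlvs} gives $X_{\pi^\mu c}(b)=\emptyset$. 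No explicit witness exists. The paper's base case is instead an \emph{elliptic} finite part (Proposition~\ref{T:elliptic}), and the combinatorics of Section~\ref{S:except} (culminating in Proposition~\ref{T:w0orCoxeter}) is exactly what guarantees that the class-changing reductions can always be performed without losing full support until an elliptic element is reached.
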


We should mention that the majority of the results used in the course of the proof of Theorem \ref{T:main} are actually type-free.  The only statements which involve a specialization to type $A_n$ occur in Section \ref{S:except}, in which we carefully provide formal remarks discussing both the possibilities for and obstructions to generalizing any such statements to other types. 

As an obvious corollary to this theorem, we prove the conjectured non-emptiness criterion for elements $x=\pi^{\mu}w$ such that $\mu$ is regular dominant, since the requisite length hypothesis $\ell(v^{-1}w) = \ell(v^{-1})+\ell(w)$ clearly holds for $v = 1$, and the condition that $x$ is full coincides with the non-emptiness criterion \eqref{E:reuman} in this case.  In general, there are many affine Weyl group elements that satisfy the additivity and fullness criteria specified in Theorem \ref{T:main}, although the dominant Weyl chamber is the only Weyl chamber where all shrunken alcoves for which non-emptiness is predicted satisfy these hypotheses.  

\begin{cor}\label{T:maincor}
Suppose $G=GL_n$, or that $G$ is of type $C_2,$ or $G_2$.  Let $x=\pi^{\mu}w \in \widetilde{W}$ be such that $\mu$ is regular dominant, and let $b\in G(L)$ be basic.  Then if $\eta_G(x) = \eta_G(b)$ and $w\in W \ba
\bigcup_{T \subsetneq S} W_T$, we have $X_x(b) \neq \emptyset$. 
\end{cor}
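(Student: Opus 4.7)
The plan is to derive this corollary directly from Theorem \ref{T:main} by specializing to $v = 1$ in the decomposition $x = \pi^{v(\mu)}w$. Since $\mu$ is assumed regular dominant, the cocharacter $\mu$ already lies in the (open) dominant Weyl chamber, so we may legitimately write $x = \pi^{\mu}w = \pi^{1 \cdot (\mu)}w$, putting $x$ into exactly the form required by the main theorem with $v = 1$.

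With $v = 1$, both of the extra hypotheses appearing in Theorem \ref{T:main} trivialize. First, the additivity condition $\ell(v^{-1}w) = \ell(v^{-1}) + \ell(w)$ becomes $\ell(w) = 0 + \ell(w)$, which is automatic. Second, since $\mu$ is dominant (and in fact regular, so the alcove $x\mathbf{a}$ lies well inside $C$), we have $\eta_2(x) = 1$; consequently the conjugate appearing in the non-emptiness criterion \eqref{E:reuman} collapses to
$\eta_2^{-1}(x)\eta_1(x)\eta_2(x) = 1 \cdot w \cdot 1 = w$.
Thus the Reuman type hypothesis $\supp(w) = S$, equivalently $w \in W \ba \bigcup_{T \subsetneq S} W_T$, is precisely the assumption supplied in the corollary.

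The remaining conditions --- that $G$ is one of $GL_n$, $C_2$, or $G_2$, that $\mu$ is regular dominant, that $b$ is basic, and that $\eta_G(x) = \eta_G(b)$ --- transfer verbatim from the statement of the corollary into the hypotheses of Theorem \ref{T:main}. Hence every hypothesis of the main theorem is satisfied, and we conclude $X_x(b) \neq \emptyset$. There is no substantive obstacle here: the entire content of Corollary \ref{T:maincor} is the $v = 1$ specialization of Theorem \ref{T:main}, and all the real work of establishing non-emptiness --- the appeal to the Geck--Pfeiffer theory of cuspidal conjugacy classes and the combinatorial analysis of Coxeter elements advertised in the abstract --- is already packaged inside Theorem \ref{T:main} itself.
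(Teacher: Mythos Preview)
Your proposal is correct and matches the paper's own reasoning: the paper explicitly states that the corollary is ``obvious'' from Theorem \ref{T:main} since the additivity hypothesis $\ell(v^{-1}w) = \ell(v^{-1})+\ell(w)$ holds trivially for $v = 1$ and the Reuman type condition coincides with criterion \eqref{E:reuman} when $\mu$ is dominant. You have spelled out in slightly more detail exactly what the paper summarizes in a single sentence.
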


As previously mentioned, G\"{o}rtz and He have recently obtained independent results which prove a more general non-emptiness result in \cite{GHdim}.  Both papers use generalizations of the geometric results of Deligne and Lusztig (see Section \ref{S:nonemptylemmas}) to construct an inductive proof of non-emptiness.  However, the particular combinatorial arguments involved in building the inductive process differ somewhat significantly, and both arguments use a variety of combinatorial results on affine and/or finite Weyl groups that may themselves be of independent interest.  

The primary distinguishing feature of our proof involves a detailed study in Section \ref{S:except} of the combinatorics of certain Coxeter elements which arise naturally in the context of analyzing finite Weyl group elements of full support which lose the full support condition when multiplied by any simple reflection in their descent set. An additional distinctive benefit of our argument is that, given $x \in \widetilde{W}$ satisfying the hypotheses of Theorem \ref{T:main}, our Proposition \ref{T:swsprop} provides an explicit algorithm for constructing a sequence of affine Weyl group elements $x_e, x_{e-1}, \dots, x_1, x_0=x$ such that $x_e$ is elliptic and $$X_{x_e}(b) \neq \emptyset \Longrightarrow \cdots \Longrightarrow X_{x_1}(b) \neq \emptyset \Longrightarrow X_x(b) \neq \emptyset.$$  The argument of G\"{o}rtz and He inductively asserts only the existence of such an element $x_e$, albeit for a more general class of affine Weyl group elements.

\vspace{5pt}
\noindent \textbf{Acknowledgments.}
The author would like to thank Robert Kottwitz for the idea to structure an argument based upon results in the original paper of Deligne and Lusztig, for helpful conversations while working on this project, and for a careful reading of several drafts of this paper.  Ulrich G\"{o}rtz also provided several useful comments on an earlier preprint.  We additionally thank John Stembridge for many beneficial discussions regarding the details of several of the combinatorial results.

\end{subsection}

\end{section}

\begin{section}{Several non-emptiness results}\label{S:nonemptylemmas}

\begin{subsection}{Non-emptiness resulting from geometric structure}\label{S:bundle}

We proceed by stating several results which yield reduction steps.  As in Deligne and Lusztig's original paper
\cite{DL}, under certain hypotheses on the length of $x \in \widetilde{W}$, the variety $X_x(b)$ can be written as the disjoint union of two bundles over affine
Deligne-Lusztig varieties associated to elements smaller than $x$ in the partial ordering on $\widetilde{W}$.
This description allows us to prove non-emptiness of $X_x(b)$ by showing non-emptiness for certain $X_{x'}(b)$
where $x' < x$.  As before, we denote by $S$ the set of finite simple reflections.

\begin{prop}\label{T:bundle}
Let $x \in \widetilde{W}$ and $s \in S$, and suppose that $\ell(x) > \ell(sx) = \ell(xs) > \ell(sxs)$.  For $b \in G(L)$, if any of $X_{sx}(b)$, $X_{xs}(b)$, or $X_{sxs}(b)$ is non-empty, then $X_{x}(b)$ is also non-empty.
\end{prop}

In addition, for the affine Weyl group elements in Proposition \ref{T:bundle} which are of the same length, the associated affine Deligne-Lusztig varieties are in bijection as sets.  Equivalently, replacing $x$ with either $sx$ or $xs$ in the hypotheses of Proposition \ref{T:bundle} also gives a bijection between the affine Deligne-Lusztig varieties associated to $x$ and $sxs$.  These facts indicate that, for the purposes of this paper, we are free to work with either element we choose.

\begin{prop}\label{T:isom}
Let $x \in \widetilde{W}$ and $s \in S$, and suppose that either $\ell(sx) > \ell(x) = \ell(sxs) > \ell(xs)$ or $\ell(xs) > \ell(x) = \ell(sxs) > \ell(sx)$.  For $b \in G(L)$, we then have $X_{x}(b) \neq \emptyset \iff X_{sxs}(b) \neq \emptyset$.
\end{prop}

For the proofs of these two non-emptiness statements and further discussion on the reduction method of Deligne and Lusztig, we refer to Section 2.5 in \cite{GHdim}.

\end{subsection}

\begin{subsection}{Non-emptiness from elliptic elements}\label{S:elliptic}

The reason that Proposition \ref{T:bundle} is useful is that it will permit us to construct an inductive argument for proving non-emptiness of the affine Deligne-Lusztig varieties of interest, where the induction is done on the length of the finite part of $x$.  Our goal will be to construct an argument that repeatedly applies Propositions \ref{T:bundle} and \ref{T:isom}, until we are reduced to proving that $X_x(b) \neq \emptyset$ for an element $x$ such that non-emptiness of the associated affine Deligne-Lusztig variety is already known.  
\begin{defn} An element $w \in W$ is called \textit{elliptic} if it is not contained in any conjugate of a proper parabolic subgroup of $W$.
\end{defn}  
\noindent We may occasionally abuse language and refer to an element $x = \pi^{\mu}w \in \widetilde{W}$ as elliptic, by which we mean that its finite part $w$ is elliptic.  The elliptic conjugacy classes have been extensively studied and can be characterized in many ways, (see \cite{GPbook}, for example, where these conjugacy classes are referred to as \textit{cuspidal}).  There are many equivalent definitions of elliptic elements in any Weyl group, but we have chosen to recall the version that is most convenient for type $A_n$ considerations.

The base case for our inductive argument will be the class of elliptic elements.  The following proposition appears as Lemma 9.4.3 in \cite{GHKRadlvs}, so we refer the reader there for the proof.

\begin{prop}\label{T:elliptic}
Let $x \in \widetilde{W}$ be elliptic and $b \in G(L)$ basic.  Then $$X_x(b) \neq \emptyset \iff \eta_G(x) = \eta_G(b).$$
\end{prop}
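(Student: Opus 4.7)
The plan is to prove the two directions separately, with the forward direction being essentially formal and the reverse direction relying on Kottwitz's classification of basic $\sigma$-conjugacy classes together with a standard characterization of elliptic elements.

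For the necessity direction, suppose $g^{-1}b\sigma(g) \in IxI$ for some $g \in G(L)$. Since $\Lambda_G$ is abelian, $\eta_G$ vanishes on the Iwahori subgroup $I$, and $G$ is split so $\sigma$ acts trivially on $\Lambda_G$, applying $\eta_G$ yields $\eta_G(b) = \eta_G(g)^{-1}\eta_G(b)\eta_G(g) = \eta_G(x)$. This is completely formal.

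For the sufficiency direction, suppose $\eta_G(x) = \eta_G(b)$. The strategy is to produce a representative of the $\sigma$-conjugacy class of $b$ that lies in $IxI$. Write $x = \pi^{\mu}w$ and choose a lift $\dot{w} \in N_G(A)(L)$ of $w$, so that $\dot{x} := \pi^{\mu}\dot{w}$ lies in $IxI$. The key point is to show that $\dot{x}$ is $\sigma$-conjugate to $b$, for then a conjugator $h$ with $\dot{x} = h^{-1}b\sigma(h)$ shows that $h \cdot I \in X_x(b)$, so $X_x(b) \neq \emptyset$. By Kottwitz's theorem, two basic elements with the same image in $\Lambda_G$ are $\sigma$-conjugate, so it suffices to show that $\dot{x}$ is itself basic with $\eta_G(\dot{x}) = \eta_G(x)$. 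The second identity is immediate from the definition of $\eta_G$ and the fact that lifts of Weyl group elements project to zero in $\Lambda_G$.

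It therefore remains to verify that $\dot{x}$ is basic. Choose $n$ with $w^n = 1$, so that $\dot{x}^n = \pi^{\mu + w(\mu) + \cdots + w^{n-1}(\mu)} \cdot \dot{w}^n$ with $\dot{w}^n \in A(L)$ of finite order mod $A(\mathcal{O})$; the Newton point of $\dot{x}$ is thus a scalar multiple of $\bar{\mu} := \sum_{i=0}^{n-1} w^i(\mu)$. By construction $\bar{\mu}$ lies in the $w$-fixed subspace $\mathfrak{a}^w$. Here we invoke the following characterization of ellipticity: $w \in W$ is elliptic if and only if $\mathfrak{a}^w = \mathfrak{a}^W$, i.e., the only $w$-fixed vectors in $\mathfrak{a}$ are the $W$-fixed ones. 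Granting this, $\bar{\mu}$ is central, so the Newton point of $\dot{x}$ is central, which is the definition of $\dot{x}$ being basic.

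The main conceptual hurdle is the ellipticity step: one must translate the parabolic-theoretic definition of elliptic into the linear-algebraic statement $\mathfrak{a}^w = \mathfrak{a}^W$. This is a classical fact about finite Coxeter groups (it is, for example, the standard characterization of cuspidal conjugacy classes as developed in \cite{GPbook}), so the work lies entirely in citing the right structural result rather than in any new computation. Everything else in the argument — the construction of $\dot{x}$, the application of Kottwitz's classification, and the transport of the base point under $\sigma$-conjugation — is a standard manipulation of Iwahori double cosets and loop group elements.
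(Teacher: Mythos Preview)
The paper does not supply its own proof of this proposition; it simply records that the result is Lemma~9.4.3 in \cite{GHKRadlvs} and refers the reader there. Your argument is correct and is essentially the standard one: the forward implication is the usual observation that $\eta_G$ is constant on $\sigma$-conjugacy classes and on Iwahori double cosets, while the reverse implication amounts to checking that a lift $\dot{x}=\pi^{\mu}\dot{w}$ is basic because the averaged cocharacter $\sum_{i=0}^{n-1} w^{i}(\mu)$ is $w$-fixed and hence, by the characterization of ellipticity via $\mathfrak{a}^{w}=\mathfrak{a}^{W}$, central. One small point worth making explicit is that you may (and should) choose $\dot{w}\in N_G(A)(k)$ so that $\sigma(\dot{x})=\dot{x}$; this is what allows you to compute the Newton point via ordinary powers of $\dot{x}$ rather than $\sigma$-twisted ones. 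With that in place, Kottwitz's parametrization of basic classes by $\Lambda_G$ finishes the argument exactly as you describe.
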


\noindent Note that the definition of elliptic, and therefore the previous proposition, does not depend on the Weyl chamber in which $x$ lies.  We further discuss elliptic conjugacy classes and their properties in Section \ref{S:conj}.

\end{subsection}

\end{section}

\begin{section}{Reduction to the finite case}\label{S:length}

We now present the key proposition that makes the proof of the main theorem work.  The general strategy will be to construct an inductive process, wherein Proposition \ref{T:swsprop} is applied successively until eventually obtaining an elliptic element.  In particular, the reader will note that in the statement of each of the five cases of the proposition, the affine Weyl group element that should replace $x$ is explicitly provided and the requisite length relationship to reapply the proposition to the new element is verified.  

\begin{prop}\label{T:swsprop}
Let $x=\pi^{v(\mu)}w \in \widetilde{W}$, where $v,w \in W$ are such that $\ell(v^{-1}w) = \ell(v^{-1})+\ell(w)$, and $\mu$ is regular dominant.  Suppose that $\ell(sws) \leq \ell(w)$ and $w \neq sws$.
\begin{enumerate} 
\item Suppose $\ell(sws) < \ell(w)$.  If $X_{sx}(b) \neq \emptyset$, then $X_x(b) \neq \emptyset$.  In this case, $sx=\pi^{sv(\mu)}sw$ and $\ell(v^{-1}s\cdot sw) = \ell(v^{-1}s)+\ell(sw)$.
\item Suppose $\ell(sws) = \ell(w)$ and $s \in D_L(w)$.
\begin{enumerate}
\item If $\ell(v^{-1}ws) = \ell(v^{-1}w)+1$ and $X_{sxs}(b) \neq \emptyset$, then $X_x(b) \neq \emptyset$.  In this case, $sxs=\pi^{sv(\mu)}sws$ and $\ell(v^{-1}s\cdot sws) = \ell(v^{-1}s)+\ell(sws)$.
\item If $\ell(v^{-1}ws) = \ell(v^{-1}w)-1$ and $X_{sx}(b) \neq \emptyset$, then $X_x(b) \neq \emptyset$.  In this case, $sx=\pi^{sv(\mu)}sw$ and $\ell(v^{-1}s\cdot sw) = \ell(v^{-1}s)+\ell(sw)$.
\end{enumerate}
\item Suppose $\ell(sws) = \ell(w)$ and $s \in D_R(w)$.
\begin{enumerate}
\item If $\ell(v^{-1}s) = \ell(v^{-1})-1$ and $X_{sxs}(b) \neq \emptyset$, then $X_x(b) \neq \emptyset$.  In this case, $sxs=\pi^{sv(\mu)}sws$ and $\ell(v^{-1}s\cdot sws) = \ell(v^{-1}s)+\ell(sws)$.
\item If $\ell(v^{-1}s) = \ell(v^{-1})+1$ and $X_{xs}(b) \neq \emptyset$, then $X_x(b) \neq \emptyset$.  In this case, $xs=\pi^{v(\mu)}ws$ and $\ell(v^{-1}\cdot ws) = \ell(v^{-1})+\ell(ws)$.
\end{enumerate}
\end{enumerate}
\end{prop}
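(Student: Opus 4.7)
The plan is to treat the five subcases in a uniform three-step manner: (i) compute the affine Weyl group lengths $\ell(x)$, $\ell(sx)$, $\ell(xs)$, and $\ell(sxs)$ using a length formula for $\pi^{v'(\mu)}w'$ applicable when $\mu$ is regular dominant, which in the presence of the additivity $\ell((v')^{-1}w') = \ell((v')^{-1}) + \ell(w')$ reduces to the clean expression $\ell(\pi^\mu) + \ell(w')$; (ii) identify which of the four length configurations of Proposition \ref{T:bundle} is realized; (iii) apply Proposition \ref{T:bundle} directly, or combine it with Proposition \ref{T:isom}, to read off the required non-emptiness implication. A separate combinatorial check will verify the new additivity relation claimed in each conclusion.

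A key preliminary observation used throughout is that the additivity $\ell(v^{-1}w) = \ell(v^{-1}) + \ell(w)$ together with $s \in D_L(w)$ forces $s \notin D_R(v^{-1})$, so $\ell(v^{-1}s) = \ell(v^{-1}) + 1$; were $s \in D_R(v^{-1})$, concatenating reduced expressions for $v^{-1}$ (ending in $s$) and $w$ (beginning in $s$) would admit a cancellation, contradicting additivity. A symmetric statement holds when $s \in D_R(w)$. In Case 1, where $\ell(sws) < \ell(w)$ forces $s \in D_L(w) \cap D_R(w)$, all four of $x, sx, xs, sxs$ admit additive decompositions and the lengths arrange into the configuration $\ell(x) > \ell(sx) = \ell(xs) > \ell(sxs)$ of Proposition \ref{T:bundle}(2), giving $X_{sx}(b) \neq \emptyset \Rightarrow X_x(b) \neq \emptyset$. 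In Case 2(a), the configuration is $\ell(xs) > \ell(x) = \ell(sxs) > \ell(sx)$ of Proposition \ref{T:bundle}(4), and Proposition \ref{T:isom}(2) then supplies $X_{sxs}(b) \neq \emptyset \Leftrightarrow X_x(b) \neq \emptyset$. Case 3(a), by left-right symmetry, produces the configuration of Proposition \ref{T:bundle}(3) and again invokes Proposition \ref{T:isom}(2).

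The more delicate situations are Cases 2(b) and 3(b), in which the additivity hypothesis does not pass directly to one of the pairs $(v, ws)$ or $(sv, sw)$, so the clean form of the length formula does not apply to every affine element in sight. The plan here is to invoke the general length formula with its correction term $-2|\mathrm{inv}(w'^{-1}) \cap \mathrm{inv}(v'^{-1})|$ and track this correction using the descent data: in Case 2(b), the hypothesis $\ell(v^{-1}ws) = \ell(v^{-1}w) - 1$ accounts for precisely one additional element in the relevant inversion-set intersection, yielding $\ell(xs) = \ell(x) - 1$ and the configuration of Proposition \ref{T:bundle}(2). Alternatively, one may reshuffle, writing $xs = v \pi^\mu u$ with $u := v^{-1}ws$ and then applying an iterated left-descent argument in the style of Example \ref{T:ex}. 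Case 3(b) is handled symmetrically. The main obstacle I anticipate is precisely this inversion-set bookkeeping, which must be carried out separately for each of the four affine elements; once the length configuration is in hand, verification of the new additivity relation---for instance $\ell(v^{-1}s) + \ell(sw) = (\ell(v^{-1})+1) + (\ell(w)-1) = \ell(v^{-1}w) = \ell(v^{-1}s \cdot sw)$ in Case 1---reduces to a one-line arithmetic identity combining the descent data for $s$ with the original additivity, which is exactly what is required to continue the inductive procedure toward an elliptic element.
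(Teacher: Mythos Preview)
Your plan is correct and follows the same three-step architecture as the paper: compute the four affine lengths, identify which configuration of Proposition~\ref{T:bundle} or~\ref{T:isom} occurs, and then verify the new additivity by a one-line arithmetic check. Your identification of the configurations and the additivity verifications are all right.

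Where you diverge from the paper is in your handling of Cases~(2b) and~(3b), and there you are making life harder than necessary. The paper does \emph{not} use the ``clean'' formula $\ell(\pi^{v'(\mu)}w')=\ell(\pi^{\mu})+\ell(w')$ and then patch it with an inversion-set correction when additivity fails. Instead it records once and for all, via Lemma~\ref{T:lengthobs}, the uniform identity
\[
\ell(\pi^{v(\mu)}w)=\ell(\pi^{\mu})+\ell(v^{-1}w)-\ell(v^{-1}),
\]
valid for \emph{any} $v,w\in W$ when $\mu$ is regular dominant, with no additivity hypothesis at all. Applying this to $x,\,sx,\,xs,\,sxs$ gives four expressions that differ only in whether $v^{-1}w$ is replaced by $v^{-1}ws$ and whether $v^{-1}$ is replaced by $v^{-1}s$. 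All pairwise comparisons then reduce immediately to the signs of $\ell(v^{-1}ws)-\ell(v^{-1}w)$ and $\ell(v^{-1}s)-\ell(v^{-1})$, which are precisely the case-splitting hypotheses in~(2a)/(2b) and~(3a)/(3b). So the ``main obstacle'' you anticipate---tracking $\lvert\mathrm{inv}(w'^{-1})\cap\mathrm{inv}(v'^{-1})\rvert$ element by element---never arises: Cases~(2b) and~(3b) are no harder than the others once you work with $\ell(v^{-1}w)$ directly rather than decomposing it as $\ell(v^{-1})+\ell(w)-2\lvert\cdots\rvert$.
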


The main idea in the proof of Proposition \ref{T:swsprop} is to prove that the requisite length relationships hold in order to apply Propositions \ref{T:bundle} and \ref{T:isom}.  The reader will observe that the hypotheses appearing in Proposition \ref{T:swsprop} are solely on the finite Weyl group part, eliminating the need to verify conditions on length relationships inside the affine Weyl group like the ones we see in Propositions \ref{T:bundle} and \ref{T:isom}.  We begin by recalling a formula for the length of affine Weyl group elements (see \cite{HKP}), which then gives rise to a useful reformulation.

\begin{prop}\label{T:lengthform}
Let $\mu \in X_*(A)$ be regular dominant, and let $w_1, w_2 \in W$.  Then \begin{equation}
\ell(w_1\pi^{\mu}w_2) = \ell(w_2) + \ell(\pi^{\mu}) - \ell(w_1). \end{equation}
\end{prop}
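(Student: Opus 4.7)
The plan is to derive this from the standard Iwahori--Matsumoto length formula already cited from \cite{HKP}. Rewriting $x := w_1 \pi^\mu w_2 = \pi^{w_1\mu}(w_1 w_2)$ and setting $y = w_1 w_2$, the formula reads
\[
\ell(x) \;=\; \sum_{\alpha \in R^+} \bigl| \langle \alpha, w_1\mu\rangle + \delta(y^{-1}\alpha) \bigr|,
\]
where $\delta(\beta) = 0$ for $\beta \in R^+$ and $\delta(\beta) = 1$ for $\beta \in R^-$. The regular dominance hypothesis on $\mu$ is tailor-made to remove the absolute values cleanly: the small integer correction $\delta \in \{0,1\}$ will never be able to flip the sign of $\langle \alpha, w_1\mu\rangle$.

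To make that precise I would partition $R^+ = A \sqcup B$ with $A = \{\alpha : w_1^{-1}\alpha \in R^+\}$ and $B = \{\alpha : w_1^{-1}\alpha \in R^-\}$, noting $|B| = \ell(w_1)$. For $\alpha \in A$ one has $\langle \alpha, w_1\mu\rangle = \langle w_1^{-1}\alpha, \mu\rangle \ge 1$ and for $\alpha \in B$ one has $\langle \alpha, w_1\mu\rangle \le -1$, by regular dominance. Consequently $\ell(x)$ splits as the sum of a ``translational'' piece $\sum_A \langle \alpha, w_1\mu\rangle - \sum_B \langle \alpha, w_1\mu\rangle$ and a ``combinatorial'' piece $\sum_A \delta(y^{-1}\alpha) - \sum_B \delta(y^{-1}\alpha)$.

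Substituting $\beta = w_1^{-1}\alpha$ in the translational piece collapses it to $\sum_{\beta \in R^+} \langle \beta, \mu\rangle = \langle 2\rho, \mu\rangle = \ell(\pi^\mu)$. The same substitution identifies the combinatorial piece with $|P \setminus Q| - |Q \setminus P|$, where $P = \{\beta \in R^+ : w_2^{-1}\beta \in R^-\}$ and $Q = \{\beta \in R^+ : w_1\beta \in R^-\}$ have cardinalities $\ell(w_2)$ and $\ell(w_1)$ respectively. By inclusion--exclusion, $|P \setminus Q| - |Q \setminus P| = |P| - |Q| = \ell(w_2) - \ell(w_1)$. Adding the two pieces yields the claimed identity.

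The main obstacle is purely bookkeeping: the several reindexings $\alpha \leftrightarrow \beta$ across positive and negative roots must be executed carefully so that no spurious signs creep in, and the identities $\ell(w) = \ell(w^{-1}) = |\{\beta \in R^+ : w\beta \in R^-\}|$ have to be applied with the correct direction. The single substantive analytic input is the bound $|\langle w_1^{-1}\alpha, \mu\rangle| \ge 1$ on every $\alpha \in R^+$, and this is precisely what regular dominance of $\mu$ supplies.
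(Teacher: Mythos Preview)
The paper does not actually prove this proposition: it is simply recalled from \cite{HKP} without argument, and then used as a black box in the proof of Lemma~\ref{T:lengthobs}. Your proposal therefore supplies strictly more than the paper does. The derivation you give is correct and is the standard one: partitioning $R^+$ by the sign of $w_1^{-1}\alpha$ and using regular dominance to guarantee $|\langle w_1^{-1}\alpha,\mu\rangle|\ge 1$ removes the absolute values in the Iwahori--Matsumoto formula; the substitution $\beta=w_1^{-1}\alpha$ then collapses the translational part to $\langle 2\rho,\mu\rangle=\ell(\pi^\mu)$, and your inclusion--exclusion identity $|P\setminus Q|-|Q\setminus P|=|P|-|Q|$ handles the combinatorial part. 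The only point worth double-checking is that the precise form of the Iwahori--Matsumoto formula you quote (with $+\delta(y^{-1}\alpha)$ rather than $-\delta$) matches the conventions in force here, where the base alcove lies in the \emph{dominant} chamber and $I$ is the preimage of the \emph{opposite} Borel; the sign of the correction term is convention-dependent, and while the same splitting works either way, the wrong sign would produce $\ell(w_1)-\ell(w_2)$ in the combinatorial piece. The sanity check $\ell(\pi^\mu w_2)=\ell(\pi^\mu)+\ell(w_2)$ for $w_1=1$ and $\mu$ dominant confirms your sign is the right one under these conventions.
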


\begin{lemma}\label{T:lengthobs} Let $\mu \in X_*(A)$ be regular dominant, and let $w_1, w_2 \in W$.  Then \begin{equation}\label{T:obseq}\ell(\pi^{w_1^{-1}(\mu)}w_2) = \ell(\pi^{\mu}w_1w_2) - \ell(w_1).\end{equation}
\end{lemma}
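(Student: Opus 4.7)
The plan is to apply Proposition \ref{T:lengthform} twice, exploiting the semidirect product structure $\widetilde{W} = X_*(A) \rtimes W$. The key identity, which comes from the action of $W$ on the translation lattice by conjugation, is
\[
\pi^{w_1^{-1}(\mu)} = w_1^{-1}\pi^{\mu}w_1
\]
as elements of $\widetilde{W}$. Rewriting the left-hand side of \eqref{T:obseq} using this, we get
\[
\pi^{w_1^{-1}(\mu)}w_2 = w_1^{-1}\pi^{\mu}w_1 w_2.
\]

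Next, I would apply Proposition \ref{T:lengthform} to $w_1^{-1}\pi^{\mu}(w_1w_2)$, noting that $\mu$ is still regular dominant and $w_1^{-1}$, $w_1w_2$ are elements of $W$. This yields
\[
\ell(\pi^{w_1^{-1}(\mu)}w_2) = \ell(w_1w_2) + \ell(\pi^{\mu}) - \ell(w_1^{-1}).
\]
Since $\ell(w_1^{-1}) = \ell(w_1)$, this gives
\[
\ell(\pi^{w_1^{-1}(\mu)}w_2) = \ell(w_1w_2) + \ell(\pi^{\mu}) - \ell(w_1).
\]

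Finally, I would apply Proposition \ref{T:lengthform} a second time, this time to the element $\pi^{\mu}(w_1w_2)$ (i.e., with the left Weyl group factor trivial), to deduce
\[
\ell(\pi^{\mu}w_1w_2) = \ell(w_1w_2) + \ell(\pi^{\mu}).
\]
Substituting this into the previous equation gives exactly
\[
\ell(\pi^{w_1^{-1}(\mu)}w_2) = \ell(\pi^{\mu}w_1w_2) - \ell(w_1),
\]
which is \eqref{T:obseq}. There is no real obstacle here; the only subtlety is the semidirect-product identity used at the start, and verifying that the hypothesis ``$\mu$ regular dominant'' is the hypothesis needed in Proposition \ref{T:lengthform} in both applications (which it trivially is, since $\mu$ is unchanged throughout).
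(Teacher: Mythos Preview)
Your proof is correct and follows essentially the same approach as the paper: rewrite $\pi^{w_1^{-1}(\mu)}w_2 = w_1^{-1}\pi^{\mu}w_1w_2$ via the semidirect product relation, apply Proposition~\ref{T:lengthform}, and then use $\ell(\pi^{\mu}w_1w_2) = \ell(\pi^{\mu}) + \ell(w_1w_2)$ together with $\ell(w_1^{-1}) = \ell(w_1)$ to conclude.
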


\begin{proof}
Rewrite the expression
\begin{align}
\pi^{w_1^{-1}(\mu)}w_2 &= w_1^{-1}w_1\pi^{w_1^{-1}(\mu)}w_2 \notag \\
\phantom{\pi^{w_1^{-1}(\mu)}w_2} &= w_1^{-1}\pi^{w_1w_1^{-1}(\mu)}w_1w_2 \notag \\
\phantom{\pi^{w_1^{-1}(\mu)}w_2} &= w_1^{-1}\pi^{\mu}w_1w_2. \notag
\end{align}
Therefore, Proposition \ref{T:lengthform} says that 
\begin{align}
\ell(\pi^{w_1^{-1}(\mu)}w_2) &= \ell(w_1^{-1}\pi^{\mu}w_1w_2) \notag \\
\phantom{\ell(\pi^{w_1^{-1}(\mu)}w_2)} &= \ell(w_1w_2) + \ell(\pi^{\mu}) - \ell(w_1^{-1}) \notag \\
\phantom{\ell(\pi^{w_1^{-1}(\mu)}w_2)} &= \ell(\pi^{\mu}w_1w_2) - \ell(w_1). \notag
\end{align} Here to obtain the final equality we have used that $\mu$ is dominant and that $\ell(w_1) = \ell(w_1^{-1})$.
\end{proof}

\begin{proof}[Proof of Proposition \ref{T:swsprop}]

We begin by computing formulas for the lengths of $x, sx, xs,$ and $sxs$.   Compute using Lemma \ref{T:lengthobs} and the fact that $\mu$ is dominant that \begin{align}\ell(x) &= \ell(\pi^{\mu}) +\ell(v^{-1}w) - \ell(v^{-1}) \label{E:xlength} \\ \ell(sx) &= \ell(\pi^{\mu}) +\ell(v^{-1}w) - \ell(v^{-1}s) \label{E:sxlength} \\  \ell(xs) &= \ell(\pi^{\mu}) +\ell(v^{-1}ws) - \ell(v^{-1}) \label{E:xslength}\\  \ell(sxs) &= \ell(\pi^{\mu}) +\ell(v^{-1}ws) - \ell(v^{-1}s) \label{E:sxslength} \end{align} 

We now analyze each of the cases in the statement of the proposition separately, although the structure of all five arguments is quite similar.

\vskip 5 pt
\textit{Case (1):}
\vskip 5 pt

We argue that, in this case, we have $\ell(x) > \ell(sx) = \ell(xs) > \ell(sxs)$, in which case we apply Proposition \ref{T:bundle} to yield the non-emptiness result.  First note that if $\ell(sws) < \ell(w)$ and $w \neq sws$, then we necessarily have that $\ell(w) > \ell(sw) = \ell(ws) > \ell(sws)$.  Comparing equations \eqref{E:xlength} and \eqref{E:xslength}, and \eqref{E:sxlength} and \eqref{E:sxslength}, we see that we always have that $\ell(x) > \ell(xs)$ and $\ell(sx)>\ell(sxs)$ in this case.  Further, by hypothesis we have $\ell(v^{-1}w) = \ell(v^{-1}) + \ell(w)$, which necessarily implies that $\ell(v^{-1}s) = \ell(v^{-1})+1$, since $\ell(sw) < \ell(w)$.  Comparing equations \eqref{E:xlength} and \eqref{E:sxlength} we therefore see that $\ell(x) > \ell(sx)$.  The only way for the three inequalities $\ell(x) > \ell(xs)$, $\ell(sx)>\ell(sxs)$, and $\ell(x) > \ell(sx)$ to simultaneously hold is to have $$\ell(x) > \ell(sx) = \ell(xs) > \ell(sxs).$$ Proposition \ref{T:bundle} thus applies and says that if $X_{sx}(b) \neq \emptyset$, then $X_x(b) \neq \emptyset.$

Finally, consider $\ell(v^{-1}s\cdot sws)$, recalling that $\ell(v^{-1}s) = \ell(v^{-1}) +1$ in this case.  We compute
\begin{align*} \ell(v^{-1}s\cdot sws) &=  \ell(v^{-1}ws) \\ \phantom{\ell(v^{-1}s\cdot sws)} &= \ell(v^{-1}w) - 1 \\  \phantom{\ell(v^{-1}s\cdot sws)} &= \ell(v^{-1}) + 1 + \ell(w)-2 \\  \phantom{\ell(v^{-1}s\cdot sws)} &= \ell(v^{-1}s) + \ell(sws). \end{align*}

\textit{Case (2):}
\vskip 5 pt

First note that $\ell(ws) > \ell(w) = \ell(sws) > \ell(sw)$, since $w \neq sws$, $\ell(sws) = \ell(w)$, and $s\in D_L(w)$ in this case. Further, since $\ell(v^{-1}w) = \ell(v^{-1}) + \ell(w)$ and $s \in D_L(w)$, we also automatically have  $\ell(v^{-1}s) = \ell(v^{-1})+1$. These observations imply that we always have $\ell(x) > \ell(sx)$ and $\ell(xs)>\ell(sxs)$ in Case (2).  Cases (2a) and (2b) are then completely determined by the value of $\ell(v^{-1}ws) = \ell(v^{-1}w) \pm 1$ and thus whether we have $\ell(xs) > \ell(x)$ or $\ell(x) > \ell(xs)$, respectively.

\vskip 5pt
\textit{Case (2a):}
\vskip 5 pt

In this case, $\ell(v^{-1}ws) = \ell(v^{-1}w)+1$, whence $\ell(xs) > \ell(x)$ and so $$\ell(xs) > \ell(x) =\ell(sxs) > \ell(sx).$$  Proposition \ref{T:isom} then tells us that $X_x(b) \neq \emptyset$ if and only if $X_{sxs}(b) \neq \emptyset$.  Finally, we see that
\begin{align*} \ell(v^{-1}s\cdot sws) &=  \ell(v^{-1}ws) \\ \phantom{\ell(v^{-1}s\cdot sws)} &= \ell(v^{-1}w) + 1 \\  \phantom{\ell(v^{-1}s\cdot sws)} &= \ell(v^{-1}) + 1 + \ell(w)\\  \phantom{\ell(v^{-1}s\cdot sws)} &= \ell(v^{-1}s) + \ell(w) \\  \phantom{\ell(v^{-1}s\cdot sws)} &= \ell(v^{-1}s) + \ell(sws). \end{align*}

\vskip 5pt
\textit{Case (2b):}
\vskip 5 pt

In this case, $\ell(v^{-1}ws) = \ell(v^{-1}w)-1$, and so $\ell(x) > \ell(xs)$.  Consequently, $$\ell(x) > \ell(sx) = \ell(xs) > \ell(sxs).$$ Proposition \ref{T:bundle} then tells us that if $X_{sx}(b) \neq \emptyset$, then $X_x(b) \neq \emptyset$.  Finally, we compute that
\begin{align*} \ell(v^{-1}s\cdot sw) &=  \ell(v^{-1}w) \\ \phantom{\ell(v^{-1}s\cdot sw)} &= \ell(v^{-1})+\ell(w)  \\  \phantom{\ell(v^{-1}s\cdot sw)} &= \ell(v^{-1}) + 1 + \ell(w)-1 \\  \phantom{\ell(v^{-1}s\cdot sw)} &= \ell(v^{-1}s) + \ell(sw). \end{align*}

The proof of Case (3) is proceeds in the same manner as Case (2), and so we leave the details to the reader.
\end{proof}

\end{section}

\begin{section}{Elliptic conjugacy classes}\label{S:conj}

The choice of affine Weyl group element which replaces $x$ in the inductive argument established by Proposition \ref{T:swsprop}, say $x'$, needs to possess several key features.  Clearly, it should be true that $X_{x'}(b) \neq \emptyset$ implies $X_x(b) \neq \emptyset$, and the element $x'$ should again satisfy the hypotheses of Proposition \ref{T:swsprop}, both of which are demonstrated in the statement of the proposition. In addition, we should expect to be able to eventually obtain an elliptic element after applying Proposition \ref{T:swsprop}, either to $x'$ or its successors.   As such, the proof of Theorem \ref{T:main} requires an understanding of elliptic conjugacy classes, since the goal of the argument will be to eventually apply Proposition \ref{T:elliptic}.  Chapter 3 in \cite{GPbook} is devoted to the study of elliptic conjugacy classes in finite Weyl groups, and we recall in this section several results of Geck and Pfeiffer that are essential to our ability to inductively apply Proposition \ref{T:swsprop}.

\begin{subsection}{Minimal length in conjugacy classes}

The main theorem in \cite{GP} guarantees the existence of the simple reflection $s$ in the statement of Proposition \ref{T:swsprop} at each stage.  In particular, Geck and Pfeiffer provide a means for taking an element $w \in W$ and conjugating by a sequence of simple reflections to obtain an element of minimal length inside that conjugacy class in $W$.  We review the notation and the statement of the main theorem in \cite{GP} for the sake of completeness.

Given $w, w' \in W$ and $s \in S$, write $w \xrightarrow{s} w'$ if $w' = sws$ and $\ell(w') \leq \ell(w)$.  If we have a sequence of elements $w = w_1, \dots, w_n = w'$ such that for every $i=2, \dots, n$ we have $w_{i-1} \xrightarrow{s_i} w_i$ for some $s_i \in S$, then write $w \rightarrow w'$.  For a conjugacy class $\mathcal{C}$ of $W$, we denote by $\mathcal{C}_{min}$ the set of elements in $\mathcal{C}$ that have minimal length.

\begin{theorem}[Geck-Pfeiffer]\label{T:sws}
Let $\mathcal{C}$ be a conjugacy class of $W$.  Then for each $w \in \mathcal{C}$, there exists a $w' \in \mathcal{C}_{min}$ such that $w \rightarrow w'$.
\end{theorem}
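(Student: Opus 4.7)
The plan is to induct on the quantity $\ell(w) - \ell_{\min}(\mathcal{C})$, where $\ell_{\min}(\mathcal{C}) := \min\{\ell(u) : u \in \mathcal{C}\}$. The base case $\ell(w) = \ell_{\min}(\mathcal{C})$ is immediate with $w' = w$. For the inductive step, the key reduction is the following lemma: \emph{if $\ell(w) > \ell_{\min}(\mathcal{C})$, then there exists a chain $w = w_0 \xrightarrow{t_1} w_1 \xrightarrow{t_2} \cdots \xrightarrow{t_r} w_r$ in the $\xrightarrow{s}$-relation with $\ell(w_r) < \ell(w)$.} Granted this lemma, the inductive hypothesis applied to $w_r$ produces a further chain from $w_r$ to an element of $\mathcal{C}_{\min}$, and concatenation finishes the argument.

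To prove the reduction lemma, I would study the set $X(w) := \{v \in \mathcal{C} : w \rightarrow v\}$, a finite subset of $\mathcal{C}$ closed under the partial operation $v \mapsto svs$ whenever $\ell(svs) \leq \ell(v)$. Assume for contradiction that every $v \in X(w)$ satisfies $\ell(v) = \ell(w)$; equivalently, for every $v \in X(w)$ and every $s \in S$ one has $\ell(svs) \geq \ell(v)$, with equality forced when $svs \in X(w)$. Fix $y \in \mathcal{C}_{\min}$ and a conjugator $z \in W$ of minimum length satisfying $y = z^{-1} w z$, with reduced expression $z = s_{i_1} \cdots s_{i_k}$. The strategy is to track the sequence of partial conjugates $w_j := (s_{i_1} \cdots s_{i_j})^{-1} w (s_{i_1} \cdots s_{i_j})$ and argue, via the exchange condition and the descent-set structure of each $w_j$, that some $s_{i_j}$ can either be permuted to act on a predecessor without increasing length (contradicting length-constancy of $X(w)$, since it then lands strictly below $\ell(w)$) or can be excised from $z$ by a braid/exchange move (contradicting minimality of $\ell(z)$).

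The main obstacle is carrying out this exchange-and-shift bookkeeping rigorously, since the interaction between conjugation and length in a Coxeter group is not monotonic: intermediate partial conjugates $w_j$ may have length strictly greater than $\ell(w)$, so the chain of partial conjugates is not itself a valid $\xrightarrow{s}$-chain. Geck and Pfeiffer's original proof in \cite{GP} navigates this difficulty by a type-by-type case analysis, invoking explicit combinatorial models for the classical families (signed permutations and tableau combinatorics) together with computer-assisted verification for the exceptional types, and using the classification of cuspidal conjugacy classes as a scaffold. A type-uniform proof is possible but requires significantly more delicate techniques involving cyclic shifts of reduced expressions and separate treatment of elliptic versus non-elliptic classes. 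In either approach, the crux is excluding the pathological scenario where $X(w)$ is length-constant while $w$ is not yet of minimum length, and this is where essentially all the work lies.
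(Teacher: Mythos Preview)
The paper does not prove this theorem at all; it is quoted as a known result from Geck and Pfeiffer's paper \cite{GP} (and their book \cite{GPbook}), and the author merely recalls the statement for completeness. So there is no ``paper's own proof'' to compare against.

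That said, your proposal is not a proof either. The inductive setup on $\ell(w) - \ell_{\min}(\mathcal{C})$ is the natural one, and your formulation of the reduction lemma is exactly right: the whole content of the theorem lies in showing that a non-minimal $w$ admits a $\xrightarrow{s}$-chain to something strictly shorter. But you then describe a strategy for the reduction lemma (track partial conjugates along a minimal-length conjugator $z$ and invoke exchange), immediately concede that this strategy does not go through because intermediate conjugates can jump above $\ell(w)$, and finish by reporting that Geck and Pfeiffer resolve this via type-by-type arguments and computer checks for the exceptional groups. That last paragraph is an accurate summary of the literature, but it means the proposal contains no argument for the one step that carries all the weight. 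In other words, you have correctly located the gap and then left it open.

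If you want to turn this into an honest proof, you have essentially two options, both of which you already name: reproduce the case analysis of \cite{GP} (classical types via signed-permutation combinatorics, exceptional types by explicit computation), or invoke one of the later uniform treatments based on cyclic-shift classes and the elliptic/non-elliptic dichotomy. Either way the work is substantial and cannot be compressed into the kind of exchange-condition bookkeeping you outline in the third paragraph; that approach, as you yourself observe, founders on the non-monotonicity of length under conjugation. For the purposes of the present paper, simply citing \cite{GP} is the correct move.
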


We point out that this result of Geck and Pfeiffer holds for any finite Weyl group $W$, including the exceptional groups.  We also remark that this result for finite Weyl groups, rather than affine Weyl groups, will be sufficient for our purposes in light of Proposition \ref{T:swsprop}.

Given an affine Weyl group element $x=\pi^{v(\mu)}w$, Theorem \ref{T:sws} guarantees that there exists an element $w' \in \mathcal{C}_{min}$ such that $w \rightarrow w'$.  If $\mu$ is regular dominant and $\ell(v^{-1}w)=\ell(v^{-1}) + \ell(w)$, we may, roughly speaking, apply Proposition \ref{T:swsprop} to obtain an affine Weyl group element $x'$ such that the finite part has decreased in length.  Continuing in this manner will yield an affine Weyl group element whose finite part has minimal length inside its conjugacy class. It is thus necessary to show that this inductive process terminates when we have an element for which non-emptiness is known.  The point of the next proposition will thus be to show that Proposition \ref{T:elliptic} applies to the resulting ``terminal'' element.

\begin{prop}[Geck-Pfeiffer]\label{T:min}
Let $w \in W$, and suppose that $\supp(w) = S$ and $w$ is of minimal length in its conjugacy class.  Then $w$ is elliptic.
\end{prop}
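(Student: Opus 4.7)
My plan is to prove the contrapositive. Suppose $w$ is not elliptic; then its conjugacy class $\mathcal{C}$ meets some proper standard parabolic $W_T$ with $T\subsetneq S$, so we may fix $w_0\in\mathcal{C}\cap W_T$. The goal is to produce an element $w'\in\mathcal{C}_{\min}$ lying entirely within $W_T$, so that $\supp(w')\subsetneq S$, and then to contradict $\supp(w)=S$ via the Geck--Pfeiffer theorem on the $W$-conjugacy of supports of minimum length elements inside a single conjugacy class.

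The key intermediate lemma is that length-non-increasing cyclic shifts preserve $W_T$: if $u\in W_T$ and $s\in S$ with $\ell(sus)\leq\ell(u)$, then $sus\in W_T$. The case $s\in T$ is trivial, so assume $s\notin T$, whence $s\notin\supp(u)$. Then no reduced expression for $u$ involves $s$, so $\ell(su)=\ell(us)=\ell(u)+1$; combined with the parity identity $\ell(sus)\equiv\ell(u)\pmod 2$ and the bound $\ell(sus)\geq\ell(us)-\ell(s)=\ell(u)$, this forces $\ell(sus)\in\{\ell(u),\ell(u)+2\}$. The hypothesis gives $\ell(sus)=\ell(u)$, after which a short case-check via the strong exchange condition applied to the non-reduced word $s\cdot u\cdot s$ shows that any two-letter deletion other than the one removing both outer $s$'s would force $\ell(su)\leq\ell(u)$ or $\ell(us)\leq\ell(u)$, contradicting $s\notin\supp(u)$. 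Hence $sus=u\in W_T$.

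Next I would apply Theorem \ref{T:sws} to $w_0$: this yields a chain of length-non-increasing cyclic shifts from $w_0$ terminating at some $w'\in\mathcal{C}_{\min}$. Iterated application of the preservation lemma keeps the entire chain within $W_T$, so $w'\in W_T\cap\mathcal{C}_{\min}$, and in particular $\supp(w')\subseteq T\subsetneq S$. To finish, I would invoke the theorem from Chapter 3 of \cite{GPbook} which states that the supports of any two elements of $\mathcal{C}_{\min}$ lie in a single $W$-orbit of subsets of $S$. If $w$ were in $\mathcal{C}_{\min}$, the sets $\supp(w)=S$ and $\supp(w')\subsetneq S$ would then be $W$-conjugate, impossible on cardinality grounds. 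Hence $w\notin\mathcal{C}_{\min}$, completing the contrapositive. The main obstacle is this last step: the preservation lemma and the application of Theorem \ref{T:sws} are fairly routine, but the conjugacy-of-supports statement for $\mathcal{C}_{\min}$ is a nontrivial structural result whose full proof depends on a delicate analysis of length-preserving cyclic shifts carried out in Geck--Pfeiffer's book.
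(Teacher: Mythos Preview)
The paper does not supply its own proof of this proposition: it simply attributes the general statement to Proposition~3.1.12 of \cite{GPbook} and then illustrates the special case $W=S_n$ by an elementary cycle-type argument showing that a full-support element which is not an $n$-cycle is conjugate to something strictly shorter. So there is no substantive ``paper's own proof'' to compare against beyond that citation and example.

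Your sketch is correct and is in fact close to how Geck and Pfeiffer establish the result. The preservation lemma---that if $u\in W_T$ and $\ell(sus)\leq\ell(u)$ then $sus\in W_T$---is valid, and your exchange-condition argument for the case $s\notin T$ is sound: one necessarily has $\ell(su)=\ell(us)=\ell(u)+1$, so $\ell(sus)=\ell(u)$, and then the only admissible deletion from the reduced word for $us$ when left-multiplying by $s$ is the trailing $s$, giving $sus=u$. Combining this with Theorem~\ref{T:sws} applied to some $w_0\in\mathcal{C}\cap W_T$ does yield a minimal-length representative $w'\in W_T$, hence with $\supp(w')\subsetneq S$.

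The only point to flag is the one you already acknowledge: the final step---that two elements of $\mathcal{C}_{\min}$ cannot have supports of different cardinalities---is itself one of the structural consequences of the Geck--Pfeiffer analysis of $\mathcal{C}_{\min}$ via strong conjugation of minimal-length elements. This is not circular, but it means your argument is a repackaging of the same body of results the paper is citing rather than an independent route; the black box you invoke is comparable in depth to Proposition~3.1.12 itself.
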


It is clear that if $w$ belongs to the Coxeter conjugacy class, then $w$ is elliptic.  If $G$ is of type $A_n$, being conjugate to a Coxeter element characterizes the elliptic elements (Proposition 3.1.16 in \cite{GPbook}).  In general, however, there are elliptic elements which are not conjugate to any Coxeter element, and Proposition \ref{T:min} appears as Proposition 3.1.12 in \cite{GPbook}. 

\end{subsection}

\begin{subsection}{Cyclic shift classes}

The proof of Theorem \ref{T:main} will proceed by induction on the length of the finite part of $x \in \widetilde{W}$.  Given $x \in \widetilde{W}$, each case of Proposition \ref{T:swsprop} provides a very specific element, say $x'$, to replace $x$ in the next step.  (In fact, in Cases (2) and (3), the element $x'$ provided is the unique element that provides both the requisite non-emptiness statement and the additivity hypothesis required to reapply the proposition.)  However, Proposition \ref{T:swsprop} may produce an element $x'=sxs$, for which the length of the finite part has not strictly decreased.  It is thus necessary to ensure that the inductive process does not terminate prematurely when conducted in this fashion; \textit{i.e.} that replacing $x$ by the prescribed $x'$ still enables us to eventually obtain an element whose finite part has minimal length in its conjugacy class.  To this extent, we require one additional result from \cite{GPbook} regarding cyclic shift classes.  We formulate the definition of the cyclic shift class that is most convenient for our purposes, although we remark that there are several other equivalent definitions.

\begin{defn} Let $w \in W$.
\begin{enumerate}
\item We say that $w' \in W$ is \emph{conjugate to $w$ by cyclic shift} if both $w \rightarrow w'$ and $w' \rightarrow w$.
\item By $\text{Cyc}(w)$ we denote the \emph{cyclic shift class of $w$}, which is the set of all elements $w' \in W$ that are conjugate to $w$ by cyclic shift.
\item A cyclic shift class $\text{Cyc}(w)$ is called \emph{terminal} if $w \rightarrow w'$ implies that $w' \in \text{Cyc}(w)$ for all $w' \in W$.
\end{enumerate}
\end{defn}

Observe that if $w \rightarrow w'$, then either $w' \in \text{Cyc}(w)$ or $\ell(w') < \ell(w)$.  Further, the above definition of $\text{Cyc}(w)$ also implies that \begin{equation}\text{Cyc}(w) = \{ w' \in W \mid w \rightarrow w'\ \text{and}\ \ell(w)  = \ell(w')\}.\end{equation}  It turns out that terminal cyclic shift classes of $w$ are subsets of the elements of minimal length inside the conjugacy class of $w$.  We will be interested in the case in which $w$ is also of full support, and the following corollary of the above results of Geck and Pfeiffer says that such elements are elliptic.

\begin{cor}\label{T:terminal}
If $\supp(w) = S$ and $\text{Cyc}(w)$ is terminal, then $w$ is elliptic.
\end{cor}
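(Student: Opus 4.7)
The plan is to combine Theorem \ref{T:sws} with Proposition \ref{T:min} and simply observe that the terminal hypothesis forces $w$ itself to lie in $\mathcal{C}_{min}$. Specifically, I will first apply Geck--Pfeiffer's theorem to extract an element $w'$ of minimal length in the conjugacy class $\mathcal{C}$ of $w$ with the property $w \rightarrow w'$, and then argue that $w' \in \text{Cyc}(w)$ to force $\ell(w) = \ell(w')$.

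In more detail: by Theorem \ref{T:sws}, there exists $w' \in \mathcal{C}_{min}$ with $w \rightarrow w'$. Since $\text{Cyc}(w)$ is terminal by hypothesis, the defining property of terminality yields $w' \in \text{Cyc}(w)$. Invoking the reformulation of the cyclic shift class recorded just before the corollary, namely
\begin{equation*}
\text{Cyc}(w) = \{w'' \in W \mid w \rightarrow w'' \text{ and } \ell(w) = \ell(w'')\},
\end{equation*}
we conclude that $\ell(w) = \ell(w')$, so $w$ itself has minimal length in $\mathcal{C}$, i.e.\ $w \in \mathcal{C}_{min}$.

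At this point the full-support hypothesis $\supp(w) = S$ enters: Proposition \ref{T:min} applies directly to $w$ and concludes that $w$ is elliptic, which is exactly the claim. The argument is essentially an immediate assembly of the two cited results, so I do not expect any genuine obstacle; the only point requiring care is the passage from ``$\text{Cyc}(w)$ is terminal'' to ``$w \in \mathcal{C}_{min}$'', which relies on chasing through the definitions of $\rightarrow$ and $\text{Cyc}(w)$ to rule out the possibility $\ell(w') < \ell(w)$.
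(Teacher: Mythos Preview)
Your proposal is correct and matches the paper's approach. The paper does not give an explicit proof of this corollary; it simply remarks beforehand that terminal cyclic shift classes are subsets of $\mathcal{C}_{min}$ and declares the statement a corollary of Theorem \ref{T:sws} and Proposition \ref{T:min}, which is exactly the argument you have written out in detail.
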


\noindent The main point of this result for our purposes is to say that we cannot be stuck in a situation in which both $w$ is not elliptic and the only cases of Proposition \ref{T:swsprop} that apply are (2a) or (3a).

\end{subsection}

\end{section}

\begin{section}{Exceptional Coxeter Elements}\label{S:except} 

Recall that in the statement of the main theorem, we impose the condition that $x\in \widetilde{W}$ be full; \textit{i.e.}, that the finite part of $x$ has full support.  One final obstruction to proving Theorem \ref{T:main} using an inductive argument would thus be if Proposition \ref{T:swsprop} never yielded a replacement element which was also full.  As we shall argue in the proof of Proposition \ref{T:induction}, it is clear in Cases (1), (2a), and (3a) of Proposition \ref{T:swsprop} that the replacement element $x'$ is again full.  By contrast, in Cases (2b) and (3b), the element $x'$ may or may not be full.  Our final goal is thus to analyze the elements $x$ such that we are both forced apply either Case (2b) or (3b) of Proposition \ref{T:swsprop}, and for which the resulting element $x'$ is never full.

\begin{subsection}{The fullness condition}\label{S:sxReuman}

The goal of this section will be to understand the finite Weyl group elements that can arise in the situation in which we must apply either Case (2b) or (3b) of Proposition \ref{T:swsprop}, but for which $sx$ or $xs$, respectively, is not full.  The next lemma provides a first approximation to understanding these elements, to which our inductive argument does not apply.  Fortunately, however, in this case the elements in consideration will turn out to be Coxeter of a certain form; namely, $w$ is the product of a simple reflection with Coxeter elements in two disjoint proper parabolic subgroups.  

\begin{lemma}\label{T:s_1unique}
Suppose that $G$ is of type $A_n$, and label the simple reflections $s_1, s_2, \dots, s_n$ such that $s_i$ interchanges the $i$ and $i+1$ coordinates.  Let $w \in W$, and suppose $\supp(w) = S$.  If $D_L(w) = \{s_j\}$ and $\supp(s_jw) \neq S$, then $w=s_js_{j-1} \cdots s_1 s_{j+1} \cdots s_n$.  Similarly, if $D_R(w) = \{s_k\}$ and $\supp(ws_k)\neq S$, then $w = s_1 \cdots s_{k-1}s_n \cdots s_{k+1}s_{k}$. In particular, $w$ is Coxeter in either case.
\end{lemma}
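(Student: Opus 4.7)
The plan is to work combinatorially in $S_{n+1}$, reading off the structure of $w$ from the one-line notation of $w^{-1}$. The two structural ingredients are the block form of $s_jw$ forced by the support hypothesis, and the descent profile of $w^{-1}$ imposed by $D_L(w)=\{s_j\}$.

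First I would note that since $w$ has full support, and any reduced expression for $w=s_j\cdot (s_jw)$ uses only simple reflections in $\{s_j\}\cup\supp(s_jw)$, the hypothesis $\supp(s_jw)\neq S$ forces $\supp(s_jw)=S\setminus\{s_j\}$. In particular $s_jw$ lies in the maximal proper parabolic $\langle s_1,\dots,s_{j-1}\rangle\times\langle s_{j+1},\dots,s_n\rangle$, which viewed as permutations of $\{1,\dots,n+1\}$ consists of those that separately stabilize the blocks $\{1,\dots,j\}$ and $\{j+1,\dots,n+1\}$. Meanwhile, $D_L(w)=\{s_j\}$ translates into the statement that $w^{-1}(1),w^{-1}(2),\dots,w^{-1}(n+1)$ is strictly increasing except at a single descent between positions $j$ and $j+1$.

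Combining these two constraints, and using that $(s_jw)^{-1}=w^{-1}s_j$ agrees with $w^{-1}$ except for swapping its values at positions $j$ and $j+1$, I would show that $w^{-1}$ is pinned down by the two integers $a:=w^{-1}(j+1)\in\{1,\dots,j\}$ and $b:=w^{-1}(j)\in\{j+1,\dots,n+1\}$: the values $w^{-1}(1)<\cdots<w^{-1}(j-1)$ are exactly the elements of $\{1,\dots,j\}\setminus\{a\}$ listed in increasing order, and symmetrically $w^{-1}(j+2)<\cdots<w^{-1}(n+1)$ exhaust $\{j+1,\dots,n+1\}\setminus\{b\}$. The crucial step is then to eliminate the remaining degrees of freedom using full support: if $a\geq 2$, the above description yields $w^{-1}(i)=i$ for $1\leq i\leq a-1$, so $w^{-1}$ (and hence $w$) stabilizes $\{1,\dots,a-1\}$ setwise, forcing $s_{a-1}\notin\supp(w)$ in contradiction to $\supp(w)=S$. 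A symmetric argument on the other block rules out $b\leq n$. Hence $a=1$ and $b=n+1$.

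With $a=1$ and $b=n+1$ in hand, $w^{-1}$ becomes the explicit $(n+1)$-cycle $(1,2,\dots,j,n+1,n,\dots,j+1)$, and a direct one-line computation verifies $w=s_js_{j-1}\cdots s_1\,s_{j+1}s_{j+2}\cdots s_n$; this word has length $n$, the length of a Coxeter element in $S_{n+1}$, so it is automatically reduced and $w$ is Coxeter. The right-descent statement reduces to the left-descent one by applying the result to $w^{-1}$: since $\supp(u)=\supp(u^{-1})$ and $D_L(u^{-1})=D_R(u)$ for every $u\in W$, the hypotheses on the right automatically become the hypotheses on the left for $w^{-1}$. Inverting the resulting expression for $w^{-1}$ and using that $s_1\cdots s_{k-1}$ commutes with $s_n\cdots s_{k+1}$ (they have disjoint, non-adjacent supports) yields the asserted $w=s_1\cdots s_{k-1}s_n\cdots s_{k+1}s_k$. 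I do not anticipate a serious obstacle beyond the combinatorial bookkeeping; the essential content is the rigidity produced by combining a single-descent condition with the full-support hypothesis, which leaves only one element standing.
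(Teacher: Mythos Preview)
Your argument is correct. Both proofs exploit the same two facts---that $s_jw$ lies in the block-preserving parabolic $W_{T_1}\times W_{T_2}$ with $T_1=\{s_1,\dots,s_{j-1}\}$ and $T_2=\{s_{j+1},\dots,s_n\}$, and that $D_L(w)=\{s_j\}$ controls the descent pattern---but they organize the deduction differently. The paper factors $w=s_j\cdot u_1\cdot u_2$ with $u_i\in W_{T_i}$ at the level of reduced words, then proves separately (via one-line notation in the smaller symmetric groups) that each $u_i$ is forced to be an ascending or descending Coxeter element in its own parabolic. You instead work globally with the one-line notation of $w^{-1}$, combining the single-descent condition with block preservation to pin down the permutation in one shot, and only afterwards read off the reduced word. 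Your route avoids the auxiliary characterization of ascending/descending Coxeter elements in sub-parabolics and is a bit more streamlined; the paper's decomposition, on the other hand, makes the product structure $s_j\cdot(\text{descending in }W_{T_1})\cdot(\text{ascending in }W_{T_2})$ transparent from the outset, which aligns with how the lemma is later used. The handling of the right-descent case by passing to $w^{-1}$ is identical in both.
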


\begin{proof}
Consider a reduced expression for $w = s_{i_1} \cdots s_{i_q}$.  Since $\supp(s_jw)\neq S$, then $s_{i_1}$ is the only occurrence of $s_j$ in any reduced expression for $w$.  Since every element in the set $T_1:= \{ s_1, s_2, \dots, s_{j-1}\}$ commutes with every element in the set $T_2:= \{ s_{j+1}, \dots, s_n\}$, we can without loss of generality assume that $s_{i_2}, \dots, s_{i_{\ell}} \in T_1$ and $s_{i_{\ell+1}}, \dots, s_{i_q} \in T_2$ where $2 \leq \ell \leq q$.  Further, since $s_j$ is the unique left descent of $w$, there are only two choices for $s_{i_2}$ and $s_{i_{\ell+1}}$, namely $s_{i_2} = s_{j-1}$ and $s_{i_{\ell+1}}= s_{j+1} $.  Indeed, otherwise the left descent set for $w$ would also include $s_{i_2}$ or $s_{i_{\ell+1}}$, since these elements would commute with $s_j$ if $\{i_2, i_{\ell+1}\} \neq \{j \pm 1\}$.  Also observe that each of the products $s_{i_2}\cdots s_{i_\ell}$ and $s_{i_{\ell+1}} \cdots s_{i_k}$ have full support in the parabolic subgroups $W_{T_1}$ and $W_{T_2}$, respectively, since $w$ itself is of full support in $W$.

We now show that any element in the group $S_{m+1}$ for $m+1<n$ that is both of full support and for which every reduced expression begins with $s_m$ (resp. $s_1$) is of the form $s_m s_{m-1} \cdots s_1$ (resp. $s_1 s_2 \cdots s_m$)  in $S_{m+1}$.  Consider a permutation $\sigma \in S_{m+1}$ that is of full support, and suppose that $s_m$ is the unique left descent for $\sigma$.  Writing $\sigma = [a_1\ a_2\ \cdots \ a_{m+1}]$ in one-line notation, and writing $m =: a_i < a_j := m+1$, we must have $i > j$, but $a_k < a_{\ell}$ for all other pairs $k < \ell$.  However, in order for $\sigma$ to be of full support, we see that $i = m+1$ and $j = 1$, and the corresponding reduced expression is $\sigma = s_ms_{m-1}\cdots s_2s_1$.  An identical argument shows that if $s_1$ is the unique left descent for $\sigma$, then we must have $\sigma = s_1s_2\cdots s_m$.  

Applying this observation, we must have that $s_{i_2}\cdots s_{i_\ell} = s_{j-1}\cdots s_1 \in W_{T_1}$ and $s_{i_{\ell+1}} \cdots s_{i_k} = s_{j+1} \cdots s_n \in W_{T_2}$. Finally, since $\supp(s_jw) \neq S$, in which case $s_j$ is in neither $T_1$ nor $T_2$, we see that $w$ is a Coxeter element in $S_n$ of the form $w =s_js_{j-1} \cdots s_1 s_{j+1} \cdots s_n $.  

Applying the result for when $w$ has a unique left descent to $w^{-1}$ proves the claim in the case in which $D_R(w) =\{ s_k \}$ and $\supp(ws_{k}) \neq S$.
\end{proof}

\begin{remark}
We should point out that Lemma \ref{T:s_1unique} does not generalize to other types.  For example, let $G$ be of type $B_4$ and label the simple reflections $s_1, s_2, s_3, s_4$ so that $(s_1s_2)^4 = 1$.  Then $w = s_{43212}$ has the properties that $w$ is of full support, $D_L(w) = \{ s_4 \}$, and $\supp(s_4w) \neq S$.  However, $w$ is clearly not Coxeter, so the conclusion of Lemma \ref{T:s_1unique} fails in general for other types.

On the other hand, it is possible to characterize the elements that satisfy the hypotheses of Lemma \ref{T:s_1unique} in general.  Such elements are products of two \textit{fully commutative} elements in disjoint proper parabolic subgroups.  An element is fully commutative if any reduced expression can be obtained from any other by means of braid relations that only involve commuting generators.  We make further remarks in this direction following the proof of Proposition \ref{T:xCoxeter}.
\end{remark}

The next lemma is a generalization of Lemma \ref{T:s_1unique} and characterizes the structure of elements for which every application of Case (2b) of Proposition \ref{T:swsprop} yields an element that is not subsequently of full support.  We characterize elements in $W$ which have full support, but lose their full support when multiplied on the left by any simple reflection in the left descent set. We point out that this characterization is type-free.

\begin{lemma}\label{T:wform}
Let $w \in W$ be such that $\supp(w) = S$. Then $\supp(sw) \neq S$ for all $s \in  D_L(w)$ if and only $w$ can be written as a product $w = w_{\ell}w_1\cdots w_k$ such that
\begin{enumerate}
\item[($i$)] $w_{\ell}$ is a product of the elements in $D_L(w)$, all of which pairwise commute, 
\item[($ii$)] $k$ is the number of connected components of $S \ba D_L(w)$, and
\item[($iii$)] for $i = 1, \dots, k$, the element $w_i$ satisfies that $w_i \in W_{T_i}$ for $T_i \subsetneq S$, where
    \begin{enumerate} 
    \item[($a$)] each $T_i$ is a connected component of $S\ba D_L(w)$,
    \item[($b$)] the sets $D_L(w)$ and $T_i$ partition $S$, and
    \item[($c$)] $w_i$ has full support in $W_{T_i}$.
    \end{enumerate}
\end{enumerate}
\end{lemma}  

\begin{proof}
The reader can easily verify that any element $w \in W$ of full support that can be written as a product of the form described above satisfies that $\supp(sw) \neq S$ for all $s \in S$ such that $s \in D_L(w)$.

To prove the converse, we begin by arguing that the elements of $D_L(w)$ all commute with one another.  Under our hypotheses, $$D_L(w) =  \left\{ s \in S \mid \ell(sw) < \ell(w)\ \text{and} \ \supp(sw) = S-\{s\}\  \right\}.$$ Consider any two distinct elements $s, t \in D_L(w)$ so that $\ell(sw)<\ell(w)$ and $\ell(tw) < \ell(w)$.  It suffices to prove that there exists a reduced expression for $w$ beginning with the word $sts$, which would contradict the fact that $s \notin \supp(sw)$, unless $s$ and $t$ commute so that $sts=t$.  

To this extent, let $J = \{s,t\}$.  We prove more generally that $\ell(vw) = \ell(w) - \ell(v)$ for any $v \in W_J$.  Recall that for any $v,w \in W$, we have $\ell(vw) \geq \vert \ell(w) -\ell(v) \vert$, which yields $$\ell(vw) \geq \ell(w) - \ell(v).$$  Now observe that our hypotheses on $w, s,$ and $t$ imply that $w$ is the right coset representative of $W_J$ of maximal length.  For the other inequality, we use induction on $\ell(w) - \ell(v)$.  The base case is for $w = v$, in which the result follows trivially.  Now consider any $v < w$, where $<$ denotes the Bruhat order on $W^J$.  Then since $w$ has maximal length in $W^J$, either $v < sv$ or $v < tv$.  Without loss of generality, suppose that $v < sv$.  Then we see that \begin{align*}
\ell(vw) & \leq \ell(svw) + 1 \\ \phantom{\ell(vw)} & \leq \ell(w) - \ell(sv) +1\\  \phantom{\ell(vw)} &= \ell(w) - (\ell(v) + 1) + 1 \\  \phantom{\ell(vw)} &= \ell(w) - \ell(v),
\end{align*}
where we have used the induction hypothesis on $sv$ to obtain the second inequality.  Hence, we have that $\ell(vw) = \ell(w) - \ell(v)$ for any $v \in W_J$, and so there exists a reduced expression for $w$ beginning with the word $sts \in W_J$.

Further, observe that the above argument applies to say that there is a reduced expression for $w$ which begins with a product of all of the elements in $D_L(w)$.  This proves that we may write $w = w_{\ell}v$ as the product of two reduced words in $W$, where $w_{\ell}$ is a product of all of the elements in $D_L(w)$, which proves ($i$).

Since for every $s \in D_L(w)$ we have $s \notin \supp(sw)$, each $s \in D_L(w)$ only occurs in $w_{\ell}$ and not in $v$.  Therefore, we may also write $v = w_1 \cdots w_k$, where $w_i \in W_{T_i}$ and the $T_i\subsetneq S$ are all distinct connected components of $S\ba D_L(w)$, proving ($ii$).  Further, the property that $\supp(w)=S$ guarantees that we may further assume both that the sets $D_L(w)$ and $T_i$ form a partition of $S$ and that $\supp(w_i) = T_i$ for all $i=1, \dots, k$, which proves ($iii$) and completes the proof.
\end{proof}

By applying Lemma \ref{T:wform} to $w^{-1}$, one obtains an analogous result for right descents, which would characterize the elements for which every application of Case (3b) of Proposition \ref{T:swsprop} results in an element which does not have full support.

\begin{prop}\label{T:xCoxeter}
Suppose that $G$ is of type $A_n$.  Let $w \in W$ be of full support.  If $\supp(sw) \neq S$ for every $s
\in D_L(w)$ and $\supp(ws) \neq S$ for every $s \in D_R(w)$, then $w$ is Coxeter.
\end{prop}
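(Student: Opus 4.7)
The plan is to apply Lemma~\ref{T:wform} to $w$ to obtain a reduced factorization $w = w_\ell v_1 \cdots v_{k+1}$, in which $w_\ell$ is a product of the $k$ pairwise-commuting left descents in $D_L(w) = \{s_{a_1}, \ldots, s_{a_k}\}$ and each $v_i \in W_{T_i}$ has full support in the consecutive interval $T_i$. Because we are in type $A_n$, the intervals $T_i$ lie in pairwise non-adjacent portions of the linear Dynkin diagram (being separated by the elements of $D_L(w)$), so any two simple reflections from different $T_i$'s commute; hence the $v_i$'s pairwise commute. It then suffices to prove that each $v_i$ is a Coxeter element of $W_{T_i}$, since this yields $\ell(w) = k + \sum_i |T_i| = n$, and a full-support element of length $n$ in $S_{n+1}$ is necessarily an $(n+1)$-cycle, i.e.\ a Coxeter element.

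I would prove that each $v_i$ is Coxeter in $W_{T_i}$ by induction on the rank of $W$, the base case being trivial. For the inductive step, when $|D_L(w)| = 1$ the conclusion is immediate from Lemma~\ref{T:s_1unique}, so assume $|D_L(w)| \geq 2$. For each non-trivial $T_i$ I would verify that $v_i$ satisfies the hypotheses of Proposition~\ref{T:xCoxeter} inside the strictly smaller-rank parabolic $W_{T_i}$, and then invoke the inductive hypothesis. Full support of $v_i$ in $T_i$ is built into Lemma~\ref{T:wform}, and the right-descent hypothesis transfers cleanly: the pairwise commutativity of the $v_j$'s yields $D_R(v_i) = D_R(w) \cap T_i$, and for $s$ in this set the factorization $ws = w_\ell v_1 \cdots (v_i s) \cdots v_{k+1}$, combined with the assumption $\supp(ws) \neq S$, forces $\supp(v_i s) \subsetneq T_i$.

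The main obstacle is verifying the left-descent hypothesis for each $v_i$ inside $W_{T_i}$. A first reduction is that $v_i$ has no interior left descent: if $s \in T_i$ were a non-boundary left descent of $v_i$, then $s$ would commute with every element of $w_\ell$ (being non-adjacent in the Dynkin diagram to every element of $D_L(w)$) and with every $v_j$ for $j \neq i$, so $sw = w_\ell v_1 \cdots (s v_i) \cdots v_{k+1}$ would be reduced of length $\ell(w) - 1$; this would force $s \in D_L(w)$, contradicting $T_i \cap D_L(w) = \emptyset$. Hence the left descents of $v_i$ inside $W_{T_i}$ must all be boundary elements of $T_i$. The delicate case is a boundary left descent, which cannot be commuted past the adjacent element of $w_\ell$. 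To dispose of it I would apply Lemma~\ref{T:wform} symmetrically to $w^{-1}$, obtaining a right decomposition $w = u_1 \cdots u_{m+1} u_r$ with analogous intervals $U_j$ separated by $D_R(w)$, and then combine both decompositions to track the multiplicity $n_s$ of each simple reflection $s$ in a reduced expression of $w$: every $s \in D_L(w) \cup D_R(w)$ already satisfies $n_s = 1$, so the argument reduces to showing that each remaining $s$, which lies simultaneously in some $T_i$ and some $U_j$, also satisfies $n_s = 1$, thereby forcing each $v_i$ to be Coxeter in $W_{T_i}$ and completing the proof.
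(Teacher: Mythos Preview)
Your setup via Lemma~\ref{T:wform} and the reduction ``each $v_i$ Coxeter in $W_{T_i}$ $\Rightarrow$ $w$ Coxeter'' are correct, as is the transfer of the right-descent hypothesis down to each $v_i$, and your observation that $v_i$ can have no \emph{interior} left descent. But the final paragraph is not a proof. After noting that $n_s=1$ for $s\in D_L(w)\cup D_R(w)$, you write that ``the argument reduces to showing that each remaining $s$ \ldots\ also satisfies $n_s=1$.'' That is exactly the assertion that $w$ is Coxeter; you have restated the goal, not advanced toward it. Nothing in simply juxtaposing the left decomposition $w=w_\ell v_1\cdots v_{k+1}$ and the right decomposition $w=u_1\cdots u_{m+1}u_r$ forces the multiplicities of the remaining generators to drop to $1$, and you give no mechanism that would do so. So the boundary-left-descent case, which you correctly flagged as the crux, is left open, and with it the whole inductive step.

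The paper avoids this difficulty altogether by \emph{not} attempting to verify the left-descent hypothesis for the $v_i$'s. Instead, it shows directly that every $v_i$ has a \emph{unique} right descent, after which Lemma~\ref{T:s_1unique} (which needs only the right-descent condition) applied inside each $W_{T_i}$ finishes immediately. The uniqueness is obtained by contradiction: if some $v_m$ had $q\geq 2$ right descents, one applies Lemma~\ref{T:wform} to $v_m^{-1}$ inside $W_{T_m}$, producing a further factorization $v_m=u_1\cdots u_{q+1}w_r$ into $q+1\geq 3$ blocks separated by the (pairwise-commuting, hence non-adjacent) elements of $D_R(v_m)$. With at least three blocks, some non-trivial $u_t$ sits in a subinterval strictly interior to $T_m$, so any left descent $s_t$ of $u_t$ is a non-boundary element of $T_m$. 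But then $s_t$ commutes with every element of $w_\ell$ and with every $v_j$ for $j\neq m$, hence $s_t\in D_L(w)$, contradicting $T_m\cap D_L(w)=\varnothing$. No induction on rank, no boundary case, and no multiplicity bookkeeping are required.
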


\begin{proof}
Apply Lemma \ref{T:wform} to write $w = w_{\ell}w_1 \cdots w_k$ as the product of reduced words in proper parabolics satisfying the properties enumerated in the lemma.  If $\# D_R(w_i) = 1$ for all $i \in \{1, \dots, k \}$, then applying Lemma \ref{T:s_1unique} to $w_i$ says that each $w_i$ is Coxeter in $W_{T_i}$, and the conclusion follows.

Now suppose for a contradiction that there exists an $m \in \{1, \dots, k\}$ such that $\# D_R(w_m) \geq 2$.  If we label the simple reflections such that $s_j$ interchanges the $j$ and $j+1$ coordinates, then we may assume that each $T_i$ consists of consecutive simple reflections, and so we may write $T_m = \{s_{j_m}, \dots, s_{j_m+p_m-1}\}$.

We now claim that for all $s \in D_L(w_m^{-1})$, we have $\supp (sw_m^{-1}) \neq T_m$.  To this extent, consider some $s \in D_R(w_m) \subseteq T_m$. Then $s \in D_R(w)$ because all simple reflections in $T_m$ commute with those in $T_{\ell}$ for $\ell > m$, since $T_m$ and $T_{\ell}$ are disjoint connected components of $S\ba D_L(w)$. Hence, since $\supp(ws) \neq S$, then $\supp(w_ms) \neq T_m$ as well.  But because $\supp(w_m) = T_m$, this claim means that $w_m^{-1}$ satisfies the hypotheses of Lemma \ref{T:wform} with $W = W_{T_m}$ and $S=T_m$.  Therefore, Lemma \ref{T:wform} says that we may write $w_m = v_1\cdots v_qw_r$ as a product of reduced words in $W_{T_m}$ satisfying the properties listed in Lemma \ref{T:wform}.  In particular, $v_i \in W_{T'_i}$ where the $T'_i$ are all disjoint subsets of $T_m$.  Because  $\# D_R(w_m) \geq 2$, there exists a $t \in \{1,\dots, q\}$ such that $D_L(v_t)$ contains a simple reflection other than $s_{j_m}$ or $s_{j_m+p_m-1}$, say $s_t$.  Observe that then $s_t \in D_L(w_m)$ and that $s_t$ commutes with $w_i$ for all $i <m$, since all $w_i$ pairwise commute by Lemma \ref{T:wform}.  However, since $s_t \neq s_{j_m}, s_{j_m+p_m-1}$, we thus see that $s_t$ commutes with every element in $D_L(w)$, and that actually $s_t \in D_L(w)$, contradicting the fact that $T_m \cap D_L(w) = \emptyset$.
\end{proof}

Proposition \ref{T:xCoxeter} will serve as the base case in an inductive proof in of the main result in Section \ref{S:commdescent}, which completes the characterization of the elements to which Proposition \ref{T:swsprop} does not apply.

\begin{remark}
As in the case in Lemma \ref{T:s_1unique}, it is possible to prove that any element which satisfies the hypotheses of Proposition \ref{T:xCoxeter} for any $W$ is fully commutative.  Stembridge provides an extensive study of fully commutative elements in arbitrary Coxeter groups in \cite{StFC}, and we remark that it seems as though the combinatorial techniques in \cite{StFC} may provide a means for generalizing Proposition \ref{T:xCoxeter} to other types.  However, due to the fact that no type-free generalization for Proposition \ref{T:w0orCoxeter} is presently known, we do not pursue a generalization of Proposition \ref{T:xCoxeter} at this time.  In particular, it is possible that a generalization of Proposition \ref{T:w0orCoxeter} may not require a type-free version of Proposition \ref{T:xCoxeter}. 
\end{remark}

\end{subsection}

\begin{subsection}{Elements with Commuting Descents}\label{S:commdescent}

So far, we have yet to discuss the situation in which there exist simple reflections that commute with $w$, a situation which is excluded by Proposition \ref{T:swsprop}.  Our eventual claim will be that such elements are Coxeter in the case of $G= GL_n$, and to prove this result we will need an additional series of combinatorial lemmas.  

\begin{defn}
Let $w \in W$, and let $s \in D(w)$.  If $sw=ws$, we say that $s$ is a \emph{commuting descent for $w$}.  Observe that if $s$ is a commuting descent for $w$, then $s \in D_L(w) \cap D_R(w)$.
\end{defn}

To characterize the elements to which Proposition \ref{T:swsprop} does not apply, we also need to understand the elements of full support in $W$ which have commuting descents.

\begin{lemma}\label{T:w'full}
Let $w \in W$ be of full support, and let $s$ be a commuting descent for $w$. Write $w = sw'$, where $\ell(w') = \ell(w) - 1.$  Then $w'$ also has full support.
\end{lemma}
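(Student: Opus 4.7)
The plan is to verify the full-support condition for $w' = sw = ws$ one simple reflection at a time. For any $t \in S$ with $t \neq s$, prepending $s$ to any reduced expression for $w'$ yields a reduced expression for $w = sw'$ (since $\ell(w) = \ell(w')+1$); as $w$ has full support and $t \neq s$, the simple reflection $t$ must already appear in the reduced word for $w'$ itself, so $t \in \supp(w')$.

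The real work is showing $s \in \supp(w')$. Suppose for contradiction that $s \notin \supp(w')$, so $w' \in W_T$ for $T := S \setminus \{s\}$, and the previous observation then forces $\supp(w') = T$. First I would translate the commuting-descent hypothesis into root-theoretic language: since $sw = ws$, the element $w$ conjugates the reflection $s$ to itself, so $w$ preserves the line $\R\alpha_s$, giving $w(\alpha_s) = \pm \alpha_s$. Because $s \in D_R(w)$ is equivalent to $w(\alpha_s) \in R^-$, the sign must be negative, yielding $w(\alpha_s) = -\alpha_s$, and hence $w'(\alpha_s) = sw(\alpha_s) = \alpha_s$.

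Now $w' \in W_T$ fixes the simple root $\alpha_s$. Using the $W$-invariant positive definite bilinear form on the reflection representation, decompose $\alpha_s = p + q$ with $p$ the orthogonal projection of $\alpha_s$ onto $V_T := \operatorname{span}\{\alpha_t : t \in T\}$ and $q \in V_T^{\perp}$. Since each simple reflection in $T$ fixes $V_T^{\perp}$ pointwise, we have $w'(p) = p$. The identity $(p, \alpha_t) = (\alpha_s, \alpha_t)$ together with the fact that distinct simple roots pair non-positively shows that $-p$ lies in the closure of the fundamental chamber of $W_T$ acting on $V_T$. By the classical theorem identifying the pointwise stabilizer of a point in the closed fundamental chamber with the standard parabolic generated by the simple reflections that fix it, $\operatorname{Stab}_{W_T}(p) = W_{T_c}$ where $T_c := \{t \in T : st = ts\}$. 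Hence $w' \in W_{T_c}$, forcing $\supp(w') \subseteq T_c$; combined with $\supp(w') = T$, every element of $T$ would commute with $s$, contradicting connectedness of the Dynkin diagram.

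The main obstacle is the clean root-theoretic reformulation: once the orthogonal decomposition reduces the fixed-point condition in $V$ to one in $V_T$, the chamber-stabilizer theorem finishes the argument uniformly across types, with no need for the case analysis or fully-commutative machinery required elsewhere in Section \ref{S:except}.
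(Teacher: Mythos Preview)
Your argument is correct and takes a genuinely different route from the paper's. The paper argues at the level of reduced words: assuming $s \notin \supp(w')$, the single occurrence of $s$ in the reduced word $w = s\, s_{i_1}\cdots s_{i_k}$ can never participate in a non-commuting braid relation (there is no second copy of $s$ available to form a braid word), so by Matsumoto's theorem the only way to reach the reduced expression $w = w's$ is for $s$ to commute past every letter of $w'$. Since $\supp(w') = S \setminus \{s\}$, this forces $s$ to commute with every other simple reflection, contradicting irreducibility.

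Your approach is root-theoretic rather than word-combinatorial: you convert the commuting-descent hypothesis into $w'(\alpha_s) = \alpha_s$, project orthogonally onto $V_T$, and invoke the chamber-stabilizer theorem to conclude $w' \in W_{T_c}$ with $T_c = \{t \in T : st = ts\}$. This is cleaner in that the structural mechanism---the stabilizer of a point in the closed fundamental chamber is the standard parabolic generated by the walls containing it---is made explicit, and no informal tracking of which braid moves are available is needed. The paper's approach is more elementary in that it never leaves the combinatorics of reduced expressions and requires only Matsumoto's theorem. Both arguments terminate at the same contradiction (connectedness of the Dynkin diagram), so the difference is one of method rather than of strength.
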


\begin{proof}
Suppose that $\supp(w') = S-\{s\}$.  Consider a reduced expression for $w'=s_{i_1}\cdots s_{i_k}$, in which we must have that $s\neq s_{i_j}$ for any $j$. In particular, all simple reflections with which $s$ does not commute must occur to the right of $s$ in the reduced expression $w=ss_{i_1}\cdots s_{i_k}$.  Therefore, no non-commuting braid relations involving $s$ can be applied to either $w$ or any word obtained from $w$ by applying braid relations.  On the other hand, there exists a reduced expression for $w$ of the form $w=w''s$, since $s$ is a commuting descent for $w$.  This means that $s$ must commute with all simple reflections in $w'$, which contradicts the hypothesis that $w$ has full support.
\end{proof}

\begin{lemma}\label{T:commuting}
Assume that $W = S_n$.  Let $w \in W$ be of full support, and suppose that $s$ and $t$ are commuting descents for $w$.  Then $s$ and $t$ commute with each other.
\end{lemma}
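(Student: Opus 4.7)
The plan is to translate the statement into a question about one-line notation for $S_n$. Write $w = [a_1, \ldots, a_n]$ with $a_k = w(k)$, and recall that in type $A$ two simple reflections $s_i = (i, i+1)$ and $s_j$ commute if and only if $|i - j| \geq 2$; in particular, if $s$ and $t$ fail to commute, then $s = s_i$ and $t = s_{i\pm 1}$ for some $i$.

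The first and main step is to establish an explicit characterization: $s_i$ is a commuting descent for $w$ if and only if $a_i = i+1$ and $a_{i+1} = i$. The identity $s_i w = w s_i$ says that swapping the values $i$ and $i+1$ in the list $[a_1, \ldots, a_n]$ gives the same permutation as swapping the entries in positions $i$ and $i+1$. Evaluating $s_i(w(k)) = w(s_i(k))$ at any $k \notin \{i, i+1\}$ forces $s_i(a_k) = a_k$, so $a_k \notin \{i, i+1\}$; since $w$ is a bijection this already yields $\{a_i, a_{i+1}\} = \{i, i+1\}$, and the descent condition $a_i > a_{i+1}$ pins down the order.

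Given this characterization, the remainder of the argument is a one-line case analysis. Suppose $s = s_i$ and $t = s_j$ are both commuting descents for $w$ and assume for contradiction that they do not commute, so without loss of generality $j = i+1$. Then the characterization applied to $s_i$ forces $a_{i+1} = i$, while the characterization applied to $s_{i+1}$ forces $a_{i+1} = i+2$, a contradiction.

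I expect the only real content to lie in the characterization of commuting descents; the rest is immediate. The subtle point is carefully distinguishing left from right multiplication by $s_i$, which act on values versus positions in one-line notation, and using the bijectivity of $w$ to upgrade the set equality $\{w(i), w(i+1)\} = \{i, i+1\}$ into the specific values forced by the descent hypothesis. I note that the proof above does not actually use the full support hypothesis on $w$, but that hypothesis is natural in the authors' inductive framework of Section \ref{S:commdescent} and is certainly harmless here.
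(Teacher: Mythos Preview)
Your proof is correct and follows essentially the same approach as the paper: both arguments use one-line notation to establish that if $s_i$ is a commuting descent then $a_i = i+1$ and $a_{i+1} = i$, and then deduce the result from this characterization. The only cosmetic difference is in the final step, where the paper rephrases the characterization as ``$(i\ i+1)$ appears in the disjoint cycle decomposition of $w$'' and concludes via disjointness of cycles, whereas you derive the contradiction directly on the value $a_{i+1}$; your observation that the full-support hypothesis is not actually used is also accurate.
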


\begin{proof}
Label the simple reflections $s_1, s_2, \dots, s_{n-1}$ so that $s_i$ interchanges the $i$ and $i+1$ coordinates. Writing $w=[a_1\ a_2\  \cdots\  a_n]$ in one-line notation, multiplication on the right by $s_i$ interchanges $a_i$ and $a_{i+1}$.  Multiplication on the left by $s_i$ interchanges $a_j=i$ and $a_k=i+1$.  Therefore, if $s_i$ is a commuting descent for $w$, this means that multiplication on the left and the right by $s_i$ yield the same permutation.  In particular, we have that $a_i = i+1$ and $a_{i+1} = i$.  Looking at the disjoint cycle decomposition for $w$, we must then have $$w = (i\ \ i+1)(a_{11}\ a_{12} \ \cdots\ a_{1k_1})\cdots (a_{j1}\ a_{j2}\ \cdots\ a_{jk_j}).$$ By the same argument, if $s_j$ is any other commuting descent for $w$, the disjoint cycle decomposition for $w$ contains the transposition $(j\ \ j+1)$, which is distinct from the transposition $(i\ \ i+1)$.  The cycles $(i\ \ i+1)$ and $(j\ \ j+1)$ therefore commute in the disjoint cycle representation of $w$, which says precisely that $s_i$ and $s_j$ commute.
\end{proof}

\begin{remark}
We point out that Lemma \ref{T:commuting} clearly fails in other types.  For example, in type $B_n$, every simple reflection is a commuting descent of the longest element $w_0$.  The conclusion that all simple reflections pairwise commute is absurd, however.
\end{remark}

\begin{lemma}\label{T:commutingCoxeter}
Let $w \in W$, where $W$ is reduced and does not contain a factor of type $A_1$.  If $w$ is Coxeter, then the set of commuting descents for $w$ is empty.
\end{lemma}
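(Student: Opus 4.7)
The plan is a short proof by contradiction, essentially packaging together Lemma~\ref{T:w'full} with a length count for elements of full support. Suppose toward contradiction that $w \in W$ is Coxeter and that $s \in S$ is a commuting descent of $w$. Since $w$ is Coxeter, it admits a reduced expression in which each simple reflection appears exactly once; in particular $\ell(w) = |S|$ and $\supp(w) = S$. Because $s \in D_L(w)$, I would write $w = sw'$ with $\ell(w') = \ell(w) - 1 = |S| - 1$.

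At this point the hypotheses of Lemma~\ref{T:w'full} are exactly in place: $w$ has full support, and $s$ is a commuting descent of $w$. That lemma then forces $\supp(w') = S$ as well. However, any element of full support in $W$ must have length at least $|S|$, since any reduced expression must contain each of the $|S|$ simple reflections at least once. This directly contradicts $\ell(w') = |S| - 1$, completing the proof.

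I do not anticipate any real obstacle here: the essential combinatorial content, namely that removing a commuting descent from a full-support element leaves the full-support property intact, has already been isolated in Lemma~\ref{T:w'full}, after which a single length count finishes things immediately. The hypothesis that $W$ be ``reduced'' does not appear to enter the argument explicitly; it seems only to guarantee the standard interpretation of length and of reduced expressions in this setting.
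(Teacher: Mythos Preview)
Your proof is correct and takes a genuinely different, shorter route than the paper's. The paper argues directly with roots: it uses that $s_j$ commutes with $w$ if and only if $w(\alpha_j)=\pm\alpha_j$, writes the Coxeter element as $w=s_{i_1}\cdots s_{i_n}$ with $i_k=j$, and expands both $s_{i_{k+1}}\cdots s_{i_n}(\alpha_j)$ and $s_{i_k}\cdots s_{i_1}(\alpha_j)$ in the simple-root basis. Comparing coefficients shows these cannot agree up to sign unless $s_j$ commutes with every other simple reflection, which is impossible in an irreducible root system. Your argument bypasses this computation entirely: you invoke Lemma~\ref{T:w'full} and the trivial bound $\ell(u)\ge |S|$ for any $u$ of full support, which immediately yields the contradiction. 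What the paper's approach buys is independence from Lemma~\ref{T:w'full}; what yours buys is brevity and a cleaner logical dependency on material already established.

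One small correction to your closing remark: the irreducibility hypothesis \emph{does} enter your argument, just invisibly, since Lemma~\ref{T:w'full} requires it. For example, in $W=\langle s\rangle\times\langle t\rangle$ of type $A_1\times A_1$, the Coxeter element $w=st$ has $s$ as a commuting descent, yet $sw=t$ fails to have full support. So both proofs use irreducibility at the same essential point (ruling out a simple reflection that commutes with all others); the paper does so explicitly, while you inherit it from the lemma you cite.
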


\begin{proof}
Let $w = s_{i_1} \cdots s_{i_n}$ be a Coxeter element.  Then $s_jw = ws_j$ if and only if $w(\alpha_j) = \pm \alpha_j$, where $\alpha_j$ is the simple root corresponding to $s_j$.  Suppose that $i_k = j$ in the reduced expression for $w$.  We argue that $s_{i_{k+1}}\cdots s_{i_n}(\alpha_j)\neq \pm s_{i_k}\cdots s_{i_1}(\alpha_j)$, in which case $s_j$ and $w$ do not commute.  Indeed, recall that $s_k(\beta) = \beta - \langle \beta, \alpha_k^{\vee}\rangle \alpha_k$ for a root $\beta$. We then write \begin{align} \theta &:= s_{i_{k+1}}\cdots s_{i_n}(\alpha_j) = \alpha_j + x_{i_{k+1}}\alpha_{i_{k+1}} + \cdots + x_{i_n}\alpha_{i_n} \\
 \theta' &:=  s_{i_{k-1}}\cdots s_{i_1}(\alpha_j) = \alpha_j + x_{i_1}\alpha_{i_1} + \cdots + x_{i_{k-1}}\alpha_{i_{k-1}},
\end{align} where at least one of the $x_{\ell} \neq 0$, since otherwise $s_j$ would commute with all other simple reflections, which is impossible for a reduced root system.  Now, applying $s_{i_k} = s_j$ to $\theta'$, we have \begin{equation} s_j (\theta') = y\alpha_j +  x_{i_1}\alpha_{i_1} + \cdots + x_{i_{k-1}}\alpha_{i_{k-1}}, \end{equation} for some $y \in \R$.  Then observe that if $\theta = \pm s_j\theta '$, then $x_{\ell} = 0$ for all $\ell$, which is a contradiction.
\end{proof}

\begin{prop}\label{T:w0orCoxeter}
Suppose that $G$ is of type $A_n$. Let $w \in W$ be of full support.   If
\begin{enumerate}
\item[($i$)] for every $s \in D_L(w)$, either $s \notin \supp(sw)$ or $sw = ws$, and
\item[($ii$)] for every $s \in D_R(w)$, either $s \notin \supp(ws)$ or $sw = ws$,
\end{enumerate}
then $w$ is Coxeter.
\end{prop}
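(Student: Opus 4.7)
The plan is to show that under hypotheses (i) and (ii), the element $w$ can possess no commuting descent; once this is established, the hypotheses reduce to those of Proposition \ref{T:xCoxeter}, which immediately gives that $w$ is Coxeter.

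I would first adapt the argument in the proof of Lemma \ref{T:wform} to establish that both $D_L(w)$ and $D_R(w)$ consist of pairwise commuting simple reflections. For any $s, t \in D_L(w)$, the element $w$ is of maximal length in its $W_{\{s,t\}}$-left-coset, so $w$ admits a reduced expression beginning with any reduced word for any element of $W_{\{s,t\}}$, in particular with $sts$ if $s$ and $t$ do not commute. Hypothesis (i) then forces a contradiction: if either $s$ or $t$ is of the ``falling'' type (say $s \notin \supp(sw)$), the doubled occurrence of that letter in the reduced expression contradicts the falling hypothesis; if both $s$ and $t$ are commuting descents, Lemma \ref{T:commuting} directly gives that they commute. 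The symmetric argument under hypothesis (ii) handles $D_R(w)$.

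Next I would argue that $D_R(w)$ cannot consist entirely of commuting descents. If every right descent of $w$ were commuting, then each such descent $s_{j_k}$ would correspond to a $2$-cycle $(j_k, j_k+1)$ of $w$, with ascending behavior in the one-line notation of $w$ between consecutive descent positions. Before the first descent position $j_1$, distinctness of values combined with $w(j_1) = j_1 + 1$ would force $w(k) = k$ for all $k \leq j_1 - 1$ (or $\{w(1), w(2)\} = \{1,2\}$ in case $j_1 = 1$), contradicting full support of $w$. Hence some non-commuting $s_j \in D_R(w)$ exists; by hypothesis (ii) it satisfies $s_j \notin \supp(ws_j)$, which places $ws_j$ in the proper parabolic $W_{S \setminus \{s_j\}}$, equivalently giving the block condition $\{w(1), \ldots, w(j-1), w(j+1)\} = \{1, \ldots, j\}$. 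A symmetric argument applied to $D_L(w)$ produces a non-commuting $s_{j'} \in D_L(w)$ with the analogous block condition on $w^{-1}$.

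Finally, suppose for contradiction that $w$ has a commuting descent $s_i$, so that $w$ contains the $2$-cycle $(i, i+1)$. The pairwise commutativity of $D_L(w)$ and $D_R(w)$ together with $s_i \in D_L(w) \cap D_R(w)$ forces $s_{i \pm 1} \notin D_L(w) \cup D_R(w)$, yielding the boundary constraints $w(i-1) \leq i-1$, $w(i+2) \geq i+2$, $w^{-1}(i-1) \leq i-1$, and $w^{-1}(i+2) \geq i+2$. The main combinatorial task, and the step I expect to be the principal obstacle, is to combine these boundary conditions with the block conditions at $s_j$ and $s_{j'}$, together with the requirement $|i - j|, |i - j'| \geq 2$, to derive an incompatibility with the full support of $w$. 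I anticipate this will require a careful case analysis on the relative positions of $j, j', i$ (in particular whether each of $j, j'$ lies to the left or to the right of $i$) to rule out all remaining configurations. Once commuting descents are ruled out, hypotheses (i) and (ii) reduce exactly to the hypotheses of Proposition \ref{T:xCoxeter}, completing the proof.
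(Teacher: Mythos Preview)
Your overall strategy---show that $w$ can have no commuting descent and then invoke Proposition~\ref{T:xCoxeter}---is exactly the paper's. Steps 1--3 are sound: Step 1 follows the proof of Lemma~\ref{T:wform} together with Lemma~\ref{T:commuting}, and Step 2 correctly forces a block $\{1,\ldots,j_1+1\}$ (or $\{1,2\}$) to be $w$-invariant, contradicting full support.

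The genuine gap is Step 4. You propose to derive a contradiction from a \emph{single} non-commuting right descent $s_j$, a \emph{single} non-commuting left descent $s_{j'}$, and the boundary inequalities at $i\pm 1$, but this is not enough data. The boundary conditions give $w(i-1)\leq i-1$, $w^{-1}(i-1)\leq i-1$, etc., but these do \emph{not} by themselves force $w$ to preserve $\{1,\ldots,i-1\}$; to close the argument you would have to reinvoke hypotheses (i) and (ii) on the descents of $w$ restricted to each side of the $2$-cycle $(i,i+1)$, effectively repeating the whole analysis recursively. Your anticipated ``case analysis on the relative positions of $j,j',i$'' would thus balloon into an induction in disguise, and as stated the step is not complete.

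The paper avoids this entirely by making the induction explicit---on $|D_w^c|$, the number of commuting descents, rather than on positions. If $s_i\in D_w^c$, set $w'=s_iw$. One checks that $w'$ again has full support (Lemma~\ref{T:w'full}), that $D_{w'}^c=D_w^c\setminus\{s_i\}$ (from the cycle-structure argument in Lemma~\ref{T:commuting}), and---this is the substantive verification---that $w'$ still satisfies (i) and (ii), using that any $t\in D_L(w')$ is automatically in $D_L(w)$ and that $s_i$ commutes with every other element of $D_w^c$. By induction $w'$ is Coxeter; but $s_iw'=w's_i$, contradicting Lemma~\ref{T:commutingCoxeter}. Hence $D_w^c=\emptyset$ from the start, and Proposition~\ref{T:xCoxeter} applies. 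This replaces your open-ended case analysis with a short, uniform argument.
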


\begin{proof}
Denote by $D_w^c = \{s \in D(w) \mid sw=ws \}$ the set of commuting descents for $w$.  We proceed by induction on the size of $D_w^c$.  If $D_w^c = \emptyset$, then Proposition \ref{T:xCoxeter} yields the result.  Now consider any $w \in W$ of full support satisfying both ($i$) and ($ii$), and suppose that $D_w^c \neq \emptyset$.  Consider $s_i \in D_w^c$, and rite $w = s_iw'$, where $\ell(w') = \ell(w) - 1.$  We first claim that $D_{w'}^c = D_w^c - \{s_i\}$.  Indeed, in the proof of Lemma \ref{T:commuting}, we showed that if $s_i$ is a commuting descent for $w$, then the disjoint cycle notation for $w$ contains the transposition $(i \ \ i+1)$. Multiplying $w$ by $s_i$ merely removes $(i\ \ i+1)$ from the disjoint cycle decomposition for $w$, leaving intact all transpositions $(j\ \ j+1)$ corresponding to any other $s_j \in D_w^c$.  Therefore, $D_{w'}^c = D_w^c - \{s_i\}$.

Lemma \ref{T:w'full} applies to say that $w'$ is of full support.  We now argue that $w'$ satisfies the other hypotheses of the proposition so that induction applies to $w'$. Consider $t \in D_L(w')$.  We wish to show that either $\supp(tw') = S-\{t\}$ or that $tw'=w't$.  Observe that since $w=s_iw'$, then $w'=s_iw=ws_i=s_iw's_i$ so that $w=s_iw'=w's_i$.  Consider a reduced expression for $w'=s_{j_1} \cdots s_{j_k}$.  Since $t \in D_L(w')$, we must have that $tw' = s_{j_1}\cdots \widehat{s_{j_{\ell}}} \cdots s_{j_k}$ for some $j_{\ell}$. But then, since $tw's_i = tw=ts_iw'$, by substitution we obtain $$s_{j_1} \cdots \widehat{s_{j_{\ell}}} \cdots s_{j_k} s_i = ts_is_{j_1} \cdots s_{j_k}.$$  Comparing the lengths of the elements on both sides of the above equality and recalling that $\ell(w)=k+1$, we see that necessarily have $t \in D_L(w)$ as well.

Since $t \in D_L(w)$, by hypothesis we know that either $t \notin \supp(tw)$ or $tw=wt$.  Suppose first that $t \notin \supp(tw)$.  Since $$tw = s_{j_1} \cdots \widehat{s_{j_{\ell}}} \cdots s_{j_k}s_i = tw's_i,$$ right multiplication by $s_i$ yields $tw' = s_{j_1}\cdots \widehat{s_{j_{\ell}}} \cdots s_{j_k}$.  Since $t \notin \supp(tw),$ and $tw'$ is a subword of $tw$, we see that $t \notin \supp(tw')$.  If, on the other hand, $t$ commutes with $w$, then $tw = tw's_i = wt = w's_it.$  But by Lemma \ref{T:commuting}, $s_i$ and $t$ commute with each other, since they are both commuting descents for $w$.  Therefore, we have $tw's_i = w's_it = w'ts_i$, from which we conclude that $tw'=w't$.  

Altogether, we have shown both that $w'$ is of full support, and if $t \in D_L(w')$, then either $t \notin \supp(tw')$ or $tw'=w't$.  An identical argument applies to conclude that if instead $t \in D_R(w')$, then either $t \notin \supp(w't)$ or $tw'=w't$.  Since $D_{w'}^c = D_w^c - \{s_i\}$, induction applies to $w'$, which means that $w'$ is Coxeter.  On the other hand, by construction we have that $s_iw'=w's_i$, contradicting Lemma \ref{T:commutingCoxeter}.  Therefore, we conclude that in fact $D_w^c \neq \emptyset$ can never hold.  We thus have demonstrated that $D_w^c = \emptyset$, and Proposition \ref{T:xCoxeter} therefore applies directly to $w$, from which we conclude that $w$ itself must be Coxeter. 
\end{proof}

\end{subsection}
\end{section}

\begin{section}{Proof of the main theorem}\label{S:thmproof}

\begin{subsection}{Proof of Theorem \ref{T:main}}
Throughout this section, we fix a basic element $b \in G(L)$.  Because we will use Proposition \ref{T:w0orCoxeter}, we must also assume that $G$ is of type $A_n$.  We begin by stating a proposition that combines the main results of Sections \ref{S:length}, \ref{S:conj}, and \ref{S:except} to establish the inductive framework for the proof of the main theorem.

\begin{prop}\label{T:induction}
Let $G$ be of type $A_n$.  Suppose that $x = \pi^{v(\mu)}w \in \widetilde{W}$ is full, additive, and $\mu$ is regular dominant.  Then either $x$ is elliptic, or there exists an element $y = \pi^{v'(\mu)}w' \in \widetilde{W}$ such that the following all hold:
\begin{enumerate}
\item[(1)] If $X_y(b) \neq \emptyset$, then $X_x(b) \neq \emptyset$;
\item[(2)] $y$ is additive;
\item[(3)] $y$ is full; and
\item[(4)] $\ell(w') < \ell(w)$.
\end{enumerate}
\end{prop}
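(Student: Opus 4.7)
The plan is as follows. Suppose $x$ is not elliptic, so $w$ is not elliptic. Since $x$ is Reuman type, $\supp(w)=S$, and in type $A_n$ the example following Proposition \ref{T:min} shows that every full-support elliptic element is a Coxeter element (equivalently, an $(n+1)$-cycle), so $w$ is not Coxeter. Proposition \ref{T:w0orCoxeter} then produces an $s\in D(w)$ with $sw\neq ws$ such that either $s\in D_L(w)$ with $\supp(sw)=S$, or $s\in D_R(w)$ with $\supp(ws)=S$.

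With this $s$ the hypotheses of Proposition \ref{T:swsprop} are satisfied: $\ell(sws)\leq\ell(w)$ because $s\in D(w)$, and $w\neq sws$ because $sw\neq ws$. I would then split on which case of Proposition \ref{T:swsprop} applies. In Cases 1, 2b, and 3b, the replacement element $sx$, $sx$, or $xs$ has finite part $sw$ or $ws$ of length $\ell(w)-1$, and I would set $y$ equal to this replacement: condition (1) is the non-emptiness implication in the relevant case, (2) is the length additivity it asserts, (4) holds by strict length decrease, and (3) holds by the choice of $s$. In Case 1, where the ``wrong'' side may appear, I would invoke Proposition \ref{T:isom}(1) to identify $X_{sx}(b)\neq\emptyset$ with $X_{xs}(b)\neq\emptyset$ and take whichever replacement has a full-support finite part.

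If instead Proposition \ref{T:swsprop} lands in Case 2a or 3a, the replacement $sxs$ has finite part $sws$ of the same length as $w$, so I iterate. The element $sxs$ is still additive by Proposition \ref{T:swsprop}; it still has full support since in Case 2a one has $\ell(sws)=\ell(sw)+1$, forcing the reduced expression $(sw)\cdot s$, so that $\supp(sws)=\supp(sw)\cup\{s\}=S$, and a symmetric argument covers Case 3a; and its finite part is conjugate to $w$ in $W$, hence still not elliptic and still not Coxeter. Proposition \ref{T:w0orCoxeter} therefore applies again to select the next simple reflection, and the procedure repeats.

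The main obstacle is showing that this iteration terminates after finitely many Case 2a/3a steps at a step triggering Case 1, 2b, or 3b, with Reuman type preserved throughout. Each iteration keeps the finite part inside the cyclic shift class $\text{Cyc}(w)\subset W$, which is finite and (by the support-preservation argument just given, applied at every step) consists entirely of full-support elements. Since $w$ has full support and is not elliptic, Corollary \ref{T:terminal} guarantees that $\text{Cyc}(w)$ is not terminal, so there exists a chain $w=u_0\to u_1\to\cdots\to u_{p-1}\to u_p$ in $W$ with $u_i\in\text{Cyc}(w)$ for $i<p$ and $\ell(u_p)<\ell(w)$. The plan is to steer the iteration along such a chain: at each cyclic-shift step the simple reflection comes from Proposition \ref{T:w0orCoxeter} applied to the current finite part, preserving full support, and at the exit step Proposition \ref{T:isom}(1) is used if necessary to pick the side whose finite part has full support---such a side exists because the exit-step finite part $u_{p-1}$ is still not Coxeter, so Proposition \ref{T:w0orCoxeter} supplies a suitable $s$. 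The final replacement is the desired $y$, and (1)--(4) follow by composing the one-step assertions along the chain.
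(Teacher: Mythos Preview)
Your overall strategy coincides with the paper's: invoke Proposition~\ref{T:w0orCoxeter} to guarantee a descent that preserves full support, feed it into Proposition~\ref{T:swsprop}, and iterate through Cases~(2a)/(3a) until a length drop occurs, using Corollary~\ref{T:terminal} to rule out getting trapped in $\mathrm{Cyc}(w)$.

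The gap is in your termination paragraph, where two incompatible procedures are conflated. You assert the existence of a Geck--Pfeiffer chain $w=u_0\to u_1\to\cdots\to u_p$ exiting $\mathrm{Cyc}(w)$, and then say you will ``steer the iteration along such a chain'' while simultaneously declaring that ``at each cyclic-shift step the simple reflection comes from Proposition~\ref{T:w0orCoxeter}.'' These cannot both hold: the chain comes equipped with its own reflections $s_i$, which need not be the ones Proposition~\ref{T:w0orCoxeter} hands you. If you follow the chain's $s_i$ at a step $i<p$, Proposition~\ref{T:swsprop} may land you in Case~(2b) or~(3b) rather than~(2a)/(3a); the forced replacement then has finite part $s_iu_{i-1}$ (resp.\ $u_{i-1}s_i$), not $u_i$, and there is no reason this has full support. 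Conversely, if you ignore the chain and always take Proposition~\ref{T:w0orCoxeter}'s reflection, your sequence of finite parts is some walk in $\mathrm{Cyc}(w)$ with no \emph{a priori} guarantee of ever reaching the exit vertex $u_{p-1}$; the non-terminality of $\mathrm{Cyc}(w)$ only says \emph{some} walk exits, not yours. The same objection applies at your ``exit step'': Proposition~\ref{T:w0orCoxeter} applied to $u_{p-1}$ may return an $s'$ different from $s_p$, and $s'$ could throw you back into Case~(2a)/(3a).

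The paper closes this gap with an extra argument you are missing: under the assumption that no single application of Proposition~\ref{T:swsprop} yields a Reuman-type $y$ with strictly shorter finite part, it argues that in fact $S_w^f=D(w)$ and that \emph{every} $s\in D(w)$ lands in Case~(2a) or~(3a). Once this is established, the chain $u_0\to\cdots\to u_p$ can be followed verbatim, each intermediate step being a genuine Case~(2a)/(3a) move preserving additivity and full support, and the final step $u_{p-1}\to u_p$ is necessarily Case~(1), where (since $s_p\in D_L(u_{p-1})\cap D_R(u_{p-1})$) the replacement $s_pu_{p-1}$ automatically has full support. Your argument would be complete if you inserted this ``every descent gives~(2a)/(3a)'' step before invoking the chain.
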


\begin{proof}
Assume that $x$ is not elliptic.  We must prove the existence of an element $y \in \widetilde{W}$ which satisfies the above properties.  Since $x$ is full so that $\supp(w) = S$, then Proposition \ref{T:min} says that $w$ does not have minimal length inside its conjugacy class.  Define the following set $$S_w:= \{ s \in D(w) \mid \ell(sws) \leq \ell(w)\ \text{and}\ w \neq sws\}.$$  Observe that since $w$ is not of minimal length in its conjugacy class, then there must exist an $s \in D(w)$ such that $\ell(sws) \leq \ell(w)$.  Furthermore, there must exist an $s \in D(w)$ such that $w \neq sws$, since otherwise $\text{Cyc}(w) = \{ w \}$ is terminal, which contradicts Corollary \ref{T:terminal}. Therefore, the set $S_w$ is non-empty.  Finally, $x$ is additive with $\mu$ regular dominant by assumption, and so the hypotheses of Proposition \ref{T:swsprop} are satisfied by $x$ and any $s \in S_w$.  Applying Proposition \ref{T:swsprop}, we obtain an element $y_s \in \{sxs, sx, xs \}$ which automatically satisfies both properties (1) and (2) in the statement of this proposition. 

Now suppose that for every $s \in S_w$, the element $y_s$ which results from applying Proposition \ref{T:swsprop} is not full.  In Case (1) of Proposition \ref{T:swsprop}, the element $w' = sw$ satisfies $\supp(w') = S$, since $\ell(sws) = \ell(w)-2$, which means that $y_s$ is full in this case.  Similarly, in Case (2a) and Case (3a), the element $w' = sws$ again has full support.  Hence, if $y_s$ is never full, this means that only Cases (2b) or (3b) of Proposition \ref{T:swsprop} apply to $x$.  In particular, for any $s \in S_w$, we have either $\supp(sw) \neq S$ or $\supp(ws) \neq S$.  In this situation, we see that for every $s \in D_L(w)$ we have either $s \notin \supp(sw)$ or $sw=ws$, and for every $s \in D_R(w)$ we have either $s \notin \supp(ws)$ or $sw=ws$.  Proposition \ref{T:w0orCoxeter} then applies to say that $w$ is Coxeter, in which case $x$ is elliptic, yielding a contradiction.  Therefore, there exists an element $s \in S_w$ such that the resulting $y_s$ is again full, satisfying property (3).  

Recall that $\eta_1: \widetilde{W} \rightarrow W$ is the surjection which isolates the finite part of an affine Weyl group element.  We define a subset of $S_w$ consisting of elements such that the corresponding element $y_s$ obtained by applying Proposition \ref{T:swsprop} has finite part with full support: $$S^f_w := \{ s \in S_w \mid \supp(\eta_1(y_s)) = S \},$$ which we have just proved to be non-empty.  If there exists an $s \in S^f_w$ such that $\ell(\eta_1(y_s)) < \ell(w)$, then $y_s$ also satisfies property (4), and we are done.  

Consider the case in which for every $s \in S^f_w$, we have $\ell(\eta_1(y_s)) = \ell(w)$.  This means that for every $s \in S^f_w$, either Case (2a) or (3a) of Proposition \ref{T:swsprop} applies.  We first argue that, in this situation, $S^f_w = D(w)$.  If we may only apply Case (2a) or (3a) of Proposition \ref{T:swsprop} to $x$, then for any $s \in D(w)$, either $w \neq sws$ and $\ell(sws) = \ell(w)$, or $w = sws$.  Now, if $w = sws$ for some $s \in D(w)$, then $s \in D_L(w) \cap D_R(w)$. For any $s \in D_L(w) \cap D_R(w)$ we have $\ell(v^{-1}ws) = \ell(v^{-1}w) - 1$, which means that we are in Case (2b) of Proposition \ref{T:swsprop}, a contradiction.  Therefore, $w \neq sws$ for all $s \in D(w)$, and $D(w) = S_w$.  However, if only Case (2a) or (3a) applies to $x$, then for any $s \in D(w)$, we have $\eta_1(y_s) = sws$ so that $y_s$ is always full, proving that $D(w) = S^f_w$.  Therefore, any choice of $s \in D(w)$ yields an element $y_s$ satisfying properties (1) - (3) of the claim.  In particular, $y_s$ is both additive and full.  Also note that $y_s$ is not elliptic, since $x$ was not.  Therefore, we may apply the previous argument to $y_s$ instead of $x$ to obtain an element $y_t$ satisfying properties (1) - (3) of the claim.  If there exists a $t \in S^f_{sws}$ such that $\ell(\eta_1(y_t)) = \ell(tswst) <\ell(sws) = \ell(w)$, then property (4) is satisfied, and we are done.  

Continuing in this manner, it remains only to discuss the situation in which repeated application of this argument never results in an element $y \in \widetilde{W}$ such that $\ell(\eta_1(y)) < \ell(w)$.  Consider any $w' \in W$ such that $w \rightarrow w'$.  By definition, there exists a sequence of simple reflections $s_i \in S$ such that $w = w_0 \xrightarrow{s_1} w_1 \xrightarrow{s_2} \cdots \xrightarrow{s_k} w_k = w'$, where $w_i = s_iw_{i-1}s_i$ and $\ell(w_i) \leq \ell(w_{i-1})$ for all $1 \leq i \leq k$.  Such a sequence $\{ s_i \}$ corresponds to a sequence of applications of Proposition \ref{T:swsprop} Cases (2a) or (3a), depending on whether $s_{i+1} \in D_L(w_i)$ or $s_{i+1} \in D_R(w_i)$ respectively, since $S^f_{w_i} = D(w_i)$ for all $0 \leq i \leq k$.  Therefore, if $\ell(w_i) = \ell(w)$ for all $1 \leq i \leq k$, then $w' \in \text{Cyc}(w)$, which means that $\text{Cyc}(w)$ is terminal, contradicting Corollary \ref{T:terminal}.  Therefore, after a finite number of applications of Cases (2a) and (3a), Proposition \ref{T:swsprop} must yield an affine Weyl group element whose finite part decreases in length, in addition to satisfying properties (1) - (3), and such an element will satisfy all four properties in the proposition.
\end{proof}

\begin{proof}[Proof of Theorem \ref{T:main}]

Suppose that $G$ is of type $A_n$.  Let $b \in G(L)$ be basic, and let $x = \pi^{v(\mu)}w$, where $\mu$ is regular dominant and $\ell(v^{-1}w) = \ell(v^{-1}) + \ell(w)$.  Further suppose that $\supp(w) = S$.

We proceed by induction on $\ell(w)$.  The base case occurs when $\ell(w) = n$ and $w$ is Coxeter, in which case $x$ is elliptic and the non-emptiness result follows by Proposition \ref{T:elliptic}.  Now suppose $\ell(w) > n$.  If $x$ is elliptic, Proposition \ref{T:elliptic} yield the result.  Otherwise, Proposition \ref{T:induction} applies to say that there exists an affine Weyl group element $y$ such that $X_y(b) \neq \emptyset$ implies that $X_x(b) \neq \emptyset$, where $y$ is additive and full, and the finite part of $y$ has length smaller than $\ell(w)$.  The result thus follows for $y$ by induction, which yields the non-emptiness result for $x$ as a consequence.

The reader may easily check the result for types $C_2$ and $G_2$ by hand.
\end{proof}

\end{subsection}

\end{section}

\bibliographystyle{amsplain}
\bibliography{references}

\end{document}